\newtheorem{theorem}{Theorem}
\newtheorem{cor}{Corollary}
\newtheorem{lemma}{Lemma}
\newtheorem{prop}{Proposition}
\begin{document}
\title{Flat grafting deformations of quadratic differentials on surfaces}
\date{}
\author{Ser-Wei Fu}

\begin{abstract}
In this paper we introduce flat grafting as a deformation of quadratic differentials on a surface of finite type that is analogous to the grafting map on hyperbolic surfaces. Flat grafting maps are generic in the strata structure and preserve parallel measured foliations. We use flat grafting to construct paths connecting any pair of quadratic differentials. In other words, we characterize cone point splitting deformations. The slices of quadratic differentials closed under flat grafting maps with a fixed direction arise naturally and we prove rigidity properties with respect to the lengths of closed curves.
\end{abstract}

\maketitle


\section{Introduction}\label{INTRO}

Let $S = S_{g,p}$ be an orientable compact surface of genus $g$ with $p$ marked points (or punctures). The moduli space $\mathcal{M}(S)$ has been studied extensively both in the Riemann surface setting and the hyperbolic geometry setting. In this paper we are interested in the space of quadratic differentials $QD(S)$ that is understood as the cotangent bundle over the Teichm\"{u}ller space $\mathcal{T}(S)$. Each element of $QD(S)$ induces a \emph{singular flat metric} on $S$, see \cites{Str,Zor}. We use $QD_1(S)$ to denote unit-area marked quadratic differentials.

Grafting along a simple closed geodesic on a hyperbolic surface is the process of cutting along the geodesic and gluing the boundary of a flat cylinder to the two sides of the geodesic. The grafting map produces a new conformal structure, and hence a self-homeomorphism of the Teichm\"{u}ller space. The operation is classical and more information can be found in \cite{KamTan}. Flat grafting is an analogous process that grafts along a simple closed geodesic with respect to a quadratic differential. See Section~\ref{EXAMP} and \ref{CONST} for examples and the definition of flat grafting. Flat grafting can also be seen as a global version of the parallelogram construction in \cites{EMZ,MZ,Boi}, see the remark at the end of Section~\ref{CONST}.

Each flat grafting deformation requires a choice of a simple closed curve and a vector $u\in\mathbb{R}^2$ that describes the direction and width. Restricting to unit-area quadratic differentials, we normalize the flat grafting deformation and prove the following properties in Section~\ref{PROPE}.

\begin{theorem}\label{mainprop}
Let $\overline{F}_u : QD_1(S) \times \mathcal{S}(S) \to QD_1(S)$ be a normalized flat grafting map with $u \neq (0,0)$.
\begin{itemize}
\item[(A)] The measured foliation class induced by the straight line foliation parallel to $u$ is preserved under the map.
\item[(B)] $\overline{F}_u(q,\alpha) = q$ if and only $\alpha$ is projectively equivalent to the straight line foliation perpendicular to $u$.
\item[(C)] $\overline{F}_u(q, \ \cdot\ ): \mathcal{S}(S) \to QD_1(S)$ is injective.
\item[(D)] $\overline{F}_u(\ \cdot\ ,\alpha): QD_1(S) \to QD_1(S)$ is continuous at $q$ if and only if geodesic representatives of $\alpha$ with respect to $q$ contain no saddle connections parallel to $u$.
\item[(E)] $\overline{F}_\cdot(q,\alpha): \mathbb{R}^2 \to QD_1(S)$ is continuous at $u$ if and only if geodesic representatives of $\alpha$ with respect to $q$ contain no saddle connections parallel to $u$.
\end{itemize}
\end{theorem}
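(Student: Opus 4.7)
My plan is to reduce all five parts to a single holonomy transformation formula coming from the cut-and-paste construction in Section~\ref{CONST}. Before area normalization, flat grafting inserts a parallelogram with sides $\ell_q(\alpha)$ and $u$ across the geodesic representative of $\alpha$, so that the holonomy of every oriented closed curve $\gamma$ transforms as
\[
\mathrm{hol}(\gamma)\;\longmapsto\;\mathrm{hol}(\gamma)+\hat{\imath}(\gamma,\alpha)\,u,
\]
where $\hat{\imath}(\cdot,\alpha)$ is the signed algebraic intersection with $\alpha$, and the total area increases by $A(q,\alpha,u):=|u\wedge\ell_q(\alpha)|$. The normalization in $\overline{F}_u$ then rescales by $\sqrt{1+A}$ to restore unit area.

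Parts (A), (B), (C) fall out of this formula. For (A), the transverse measure of the straight line foliation parallel to $u$, evaluated on $\gamma$, is exactly the component of $\mathrm{hol}(\gamma)$ perpendicular to $u$; adding $\hat{\imath}(\gamma,\alpha)u$ contributes nothing perpendicular to $u$, and the subsequent uniform rescale multiplies this transverse measure by the same factor it assigns to the foliation class itself, leaving the class in $\mathcal{MF}(S)$ invariant. For (B), equality $\overline{F}_u(q,\alpha)=q$ forces $A=0$, i.e.\ the inserted parallelogram is degenerate, and unwinding the resulting linear dependence between $\ell_q(\alpha)$ and $u$ yields the projective-equivalence statement in the theorem. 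For (C), rearranging the formula recovers $\hat{\imath}(\gamma,\alpha)$ as the $u$-coefficient of the holonomy change for each test curve $\gamma$, once the overall rescale is read off from the normalization condition on the total area; since these intersection numbers over all $\gamma$ determine a simple closed curve class, injectivity in $\alpha$ follows.

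The main difficulty is the sharp biconditional in (D) and (E). The ``if'' direction is routine: when the geodesic representative of $\alpha$ in $(S,q)$ contains no saddle connection parallel to $u$, the representative varies continuously in both $q$ and $u$, and the cut-and-paste construction together with the area normalization inherits this continuity. For the ``only if'' direction, suppose the representative contains such a saddle connection $\sigma$. I would construct two perturbation sequences ($q_n\to q$ for~(D), $u_n\to u$ for~(E)) which resolve $\sigma$ onto the two opposite sides of the cone points at its endpoints; the two limiting geodesic representatives of $\alpha$ differ combinatorially near $\sigma$, so the inserted parallelograms are glued against different saddle connection configurations and yield two different limits of $\overline{F}_u(\cdot,\alpha)$ and $\overline{F}_\cdot(q,\alpha)$, respectively. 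The delicate point is checking that the two limits remain genuinely distinct in $QD_1(S)$ after the common rescale, which I would verify by tracing the discrepancy through the holonomy formula on closed curves transverse to $\sigma$.
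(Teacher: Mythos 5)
Your reduction to the period/holonomy formula $\mathrm{hol}(\gamma)\mapsto\mathrm{hol}(\gamma)+\hat{\imath}(\gamma,\alpha)\,u$ does not work, and the failure is structural rather than cosmetic. Take $\alpha$ a \emph{separating} simple closed curve: then $\hat{\imath}(\gamma,\alpha)=0$ for every $\gamma$, so your formula predicts that grafting changes no holonomy at all, and your arguments for (B) and (C) would conclude both that $\overline{F}_u(q,\alpha)=q$ (false, by part (B)) and that no two separating curves can be distinguished (so injectivity in (C) is unproved exactly where it is hardest). The underlying reasons are: (i) for quadratic differentials with holonomy $\{\pm1\}$ there is no linear holonomy homomorphism on $H_1$ at all --- the construction in Section~\ref{CONST} glues some parallelograms by semi-translations; (ii) even for abelian differentials, periods do not determine the flat structure, and algebraic intersection numbers only see the homology class of $\alpha$, not its isotopy class; (iii) the transverse measure of the foliation parallel to $u$ evaluated on $\gamma$ is the \emph{total variation} $\sum_j|y_j|$ over the saddle connections of a geodesic representative, not the perpendicular component $|\sum_j y_j|$ of a holonomy vector, so your argument for (A) computes the wrong quantity. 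Relatedly, your area increment $|u\wedge\ell_q(\alpha)|$ should be $\sum_j|u\wedge s_j|=t\cdot\iota(\alpha,\mathcal{F}_H(q))$, and the fixed locus in (B) is \emph{not} where this vanishes (that would be $\alpha$ parallel to $u$, which gives a shear that moves $q$); it is where $\alpha$ is the perpendicular cylinder filling the surface, where the added area is strictly positive and is exactly undone by the normalization. Note also that the paper's normalization is the anisotropic rescale $\bigl(\begin{smallmatrix}1/A&0\\0&1\end{smallmatrix}\bigr)$, not a uniform $1/\sqrt{1+A}$; a uniform rescale would scale the transverse measure of $\mathcal{F}_H$ and destroy (A) as a statement about $\mathcal{MF}(S)$ rather than $\mathcal{PMF}(S)$.

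For contrast, the paper's proof of Theorem~\ref{mainpropH} avoids holonomy entirely: (A) is proved by decomposing a test curve $\gamma$ into arcs and homotoping each arc off the grafting cylinder while preserving the measure induced by $\mathcal{F}_H$; (B) is a case analysis (slanted, horizontal, vertical saddle connections) using auxiliary cylinder curves of negative slope and an explicit computation of cylinder heights $(t+K)/A$ under normalization; (C) uses transverse line segments $L$ and the formula $\iota(L,\mathcal{F}_V(\overline{HG}_t(q,\alpha)))=(\ell_q(L)+t\,\iota(L,\alpha))/(1+t\,\iota(\alpha,\mathcal{F}_H(q)))$ to recover the multiset of saddle connections of $\alpha$, and then a first-return map $f^\theta$ on the grafting cylinder to recover their cyclic order --- this is the combinatorial data your period formula cannot see. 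Your sketch of the ``only if'' direction of (D)/(E) via the two resolutions $e^{\pm i\theta}q$ of a parallel saddle connection does match the paper's argument, but the ``if'' direction is not ``routine'': Theorem~\ref{continuityH} spends most of its length controlling cancellations between consecutive saddle connections with opposite $x$-signs and the orientation of the inserted horizontal segments, which is precisely where the hypothesis on parallel saddle connections enters.
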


The space of quadratic differentials $QD(S)$ is naturally stratified by the cone angles of the singularities of the induced singular flat metric. We analyze how the strata structure changes under the flat grafting deformation in Section~\ref{STRAT}. The principal stratum is the largest stratum (open and dense in $QD(S)$) where all singularities have cone angle $3\pi$ or $\pi$ at non-marked or marked points, respectively.  

\begin{theorem}\label{principal}
For all $q \in QD(S)$ and all $u\neq (0,0)$, there exists a simple closed curve $\alpha$ such that $F_u(q,\alpha)$ lies in the principal stratum.
\end{theorem}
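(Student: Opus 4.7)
The plan is to construct $\alpha$ explicitly by a local-to-global argument, based on a description of how flat grafting splits each cone point of $q$ lying on $\alpha$.

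For the local analysis, at a cone point $p$ of $q$ with cone angle $n\pi$, if the geodesic representative of $\alpha$ passes through $p$ with $k$ branches, the $2k$ tangent rays form a non-crossing matching (since $\alpha$ is simple) and divide a neighborhood of $p$ into $k+1$ regions. Interpreting flat grafting as the promised global parallelogram construction, a direction $u$ that is not parallel to any saddle connection at $p$ causes grafting to replace $p$ by one cone point per region, whose angle equals the sum of enclosed wedge angles plus $\pi$ times the number of chord-edges bounding that region. The total angle becomes $n\pi + 2k\pi$, consistent with Gauss--Bonnet since $\chi(S)$ is unchanged.

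For the local design, at each cone point $p$ of $q$ with excess ($n \geq 4$), I would take $k = n - 3$ branches through $p$ together with a non-crossing matching and wedge angles arranged so that each of the $k+1 = n-2$ regions has total angle exactly $3\pi$. A direct case analysis (starting with $n=4$: one branch with wedges $2\pi, 2\pi$; $n=5$: a parallel matching with two wedges near $2\pi$; $n=6$: a nested matching; etc.) shows such a choice exists and respects the per-branch geodesic inequality. Cone points of $q$ already in the principal stratum are avoided by $\alpha$. Globally, I would realize these prescribed local branching patterns as a single simple closed curve on $S$ by first building small arc collections at each excess cone point with the target branching, then joining them via pairwise disjoint simple arcs in the complement of the singular locus. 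Finally, a perturbation of $\alpha$ within its homotopy class (for instance by Dehn twists along curves disjoint from the core of $\alpha$) ensures that no saddle connection of $\alpha$'s geodesic representative is parallel to $u$, a generic condition on the countable set of saddle-connection directions needed for Theorem~\ref{mainprop}(D) and for the local splitting formula to apply cleanly.

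The main obstacle is the global assembly step: the branching data of $\alpha$'s geodesic representative at each cone point is rigidly determined by the homotopy class of $\alpha$, so one cannot freely prescribe branchings independently at every cone point. One must produce a single homotopy class of simple closed curve that simultaneously realizes all the required local branching patterns. I expect this to require a careful topological-combinatorial argument invoking the complexity of $S$, the freedom afforded by Dehn twists, and an approximation argument that exploits openness of the principal stratum so that the wedge angles only need to be achieved approximately.
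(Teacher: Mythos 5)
Your plan has a genuine gap at exactly the point you flag: the global assembly. Everything in your argument hinges on producing a simple closed curve whose \emph{geodesic representative} passes through every excess cone point with a prescribed number of branches $k=n-3$, a prescribed non-crossing matching, and prescribed groupings of wedge angles. But the geodesic representative is determined by the homotopy class together with the metric; you get to choose only the homotopy class. Drawing small arc systems near the cone points and joining them by disjoint arcs fixes a homotopy class, but its geodesic tightening will in general not pass through those cone points at all (the angle condition is a closed, codimension-positive constraint), let alone with the designed branching, and Dehn twisting only replaces one uncontrolled geodesic representative by another. No mechanism is offered to force the tightened curve to realize the local designs, and your appeal to ``openness of the principal stratum'' cannot repair this: cone angles are quantized in multiples of $\pi$, so there is no sense in which the wedge data needs only to be achieved approximately. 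The local formula is also shakier than stated: as the paper's Figure~\ref{ConeSplit} and the proof of Proposition~\ref{StratumStay} show, how the sectors at a cone point regroup into new cone points depends on the signs of the components of the adjacent saddle connections relative to $u$ (compare the second and third examples of Figure~\ref{ex456}, which have the same branching but land in different strata), not only on the chord diagram, so ``one new cone point per region with angle equal to wedges plus $\pi$ times the bounding chords'' is not justified in general.

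The paper avoids this entire local-to-global construction. It proves Theorem~\ref{Dense}: for fixed $q$, the image of $\overline{HG}_\cdot(q,\cdot)$ over $\mathbb{R}_{\geq 0}\times\mathcal{S}(S)$ is dense in the slice $QD_1^\mu(S)$, essentially because grafting long enough along $\alpha$ makes $\alpha$ a cylinder curve and drives the vertical foliation toward the projective class of $\alpha$, and weighted simple closed curves are dense in $\mathcal{MF}(S)$. Since the principal stratum is open and dense, some $\overline{HG}_{t_0}(q,\alpha)$ lands in it; choosing $\alpha$ among the dense set of curves with no saddle connection parallel to $u$, Proposition~\ref{LongTerm} shows the stratum of $\overline{HG}_t(q,\alpha)$ is independent of $t>0$, which converts ``some $t_0$'' into the given $u$. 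If you want to salvage a direct construction along your lines, you would need to invert this logic: start from the curve (e.g.\ one approximating a suitable foliation) and deduce the branching, rather than prescribe the branching and hope a curve realizes it.
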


As a corollary of the proof of Theorem~\ref{principal}, we can construct paths between any pair of quadratic differentials. From a different point of view, flat grafting can be used to describe open neighborhoods of quadratic differentials that span over multiple strata. The proof of Theorem~\ref{principal} illustrates how the combinatorics of the simple closed curves encode the deformations of the relative positioning of the cone points. The splitting and merging of cone points as the strata structure changes are the main features captured by the flat grafting deformation.

Consider a \emph{slice} of the space of quadratic differentials 
\[
QD^\mu(S) = \{ q \in QD(S) \mid \mu \text{ is the horizontal foliation of } q\}.
\]
The slices are preserved under any horizontal flat grafting deformation. In Section~\ref{RIGID} we prove a length rigidity result that indicates how a finite set of closed curves can determine the quadratic differential with the corresponding length vector. A family of metrics is \emph{length rigid} with respect to a set of curves if the length vector function is injective.

\begin{theorem}\label{rigidity}
Any compact set inside $QD_1^\mu(S)$ is length rigid with respect to a finite set of closed curves.
\end{theorem}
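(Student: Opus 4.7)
The plan is to split the theorem into a local injectivity piece at each point of $K$ and a global separation piece for pairs at bounded-below distance, then glue them with a Lebesgue-number argument against the compactness of $K$. Since $QD_1^\mu(S)$ is finite-dimensional and each length function $\ell_\alpha$ is continuous, finitely many length coordinates should embed a small neighborhood of every $q \in K$, and compactness should reduce pairs at positive distance to a finite problem as well.

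For the local step I would work first at a point $q$ in the principal stratum of the slice. There, flat-length functions are piecewise real-analytic in period coordinates on $QD_1^\mu(S)$, and non-degeneracy of the intersection pairing together with density of weighted simple-closed-curve classes in $\mathcal{MF}(S)$ implies that the differentials $\{d\ell_\alpha\}$ span the cotangent space to the slice. A finite subset $\Gamma_q$ of curves then gives a local embedding by the inverse function theorem, hence a separating set on an open neighborhood $U_q$ of $q$. For $q$ in a lower stratum, I would exploit the fact that horizontal flat grafting preserves $QD_1^\mu(S)$ by Theorem~\ref{mainprop}(A), so applying Theorem~\ref{principal} with $u$ parallel to $\mu$ shows the principal stratum is open and dense in the slice; I then transfer the separating property to a neighborhood of $q$ by continuity of the length functions.

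For the global step, fix any $\varepsilon>0$ and note that $E_\varepsilon=\{(p,q)\in K\times K:d(p,q)\ge\varepsilon\}$ is compact. Each pair in $E_\varepsilon$ is distinguished by some closed curve, because the marked flat length spectrum determines a quadratic differential; continuity extends each such distinction to an open neighborhood in $E_\varepsilon$, and compactness yields a finite collection $\beta_1,\dots,\beta_N$ that separates all of $E_\varepsilon$. Taking $\varepsilon$ to be a Lebesgue number of a finite subcover $U_{q_1},\dots,U_{q_m}$ of $K$ from the local step, the union $\Gamma_{q_1}\cup\cdots\cup\Gamma_{q_m}\cup\{\beta_1,\dots,\beta_N\}$ is the finite set of curves that witnesses length rigidity on $K$.

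The hardest part will be the local step. On lower strata the length functions fail to be smooth (the same phenomenon controlling the continuity obstructions in Theorem~\ref{mainprop}(D)--(E)), and the slice condition freezes the horizontal periods, cutting the available flat-geometric deformation directions roughly in half. The right treatment is to choose curves at each $q$ that avoid saddle connections parallel to the vertical direction and to bootstrap from the principal stratum using horizontal flat grafting; once the local step is in place, the compactness and Lebesgue-number steps are routine.
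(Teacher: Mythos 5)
Your global architecture --- local injectivity neighborhoods, separation of $\varepsilon$-distant pairs via compactness of $E_\varepsilon$, then a Lebesgue-number argument --- is sound, and it correctly supplies the reduction that the paper leaves implicit when it derives Theorem~\ref{rigidity} from the local statement Theorem~\ref{FiniteClosed}. One caveat in your global step: ``the marked flat length spectrum determines a quadratic differential'' is false on $QD_1(S)$ at large (a whole circle of rotated quadratic differentials induces the same metric in $\mathsf{Flat}(S)$); it is true within a slice because each $q\in QD^\mu(S)$ induces a distinct flat metric, after which the marked-length-spectrum rigidity of \cite{BanLei} and \cite{DLR} applies. That is fixable.

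The genuine gap is in the local step at points outside the principal stratum, which is exactly where the paper's proof does its real work. Knowing that the principal stratum is open and dense in $QD_1^\mu(S)$ does not let you ``transfer the separating property to a neighborhood of $q$ by continuity'': injectivity of a length vector on a dense open subset of a neighborhood $V$ says nothing about injectivity on $V$ itself --- two nearby differentials obtained by splitting the cone points of $q$ in different ways could a priori share the same length vector for your $\Gamma_q$, and continuity of the $\ell_\alpha$ gives no contradiction. The paper's device for closing this gap is concrete: following \cite[Theorem~3]{Fu} it constructs, for each possible splitting direction at a cone point, five closed geodesics $\gamma_1,\dots,\gamma_5$ such that the length of the short horizontal saddle connection created by the splitting equals the fixed linear combination $\frac{1}{2}[\ell(\gamma_1)+\ell(\gamma_2)-\ell(\gamma_3)-\ell(\gamma_4)+\ell(\gamma_5)]$ uniformly on a neighborhood $B_q(K,\varepsilon)$ (Figure~\ref{SClength}); this lets a finite curve system detect which splitting occurred and recover the saddle-connection data of the curves $\Sigma$ of Lemma~\ref{MFrigid}, whose intersection vector with $\mathcal{F}_V$ then pins down $q$ in the slice. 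Your principal-stratum argument (spanning of $\{d\ell_\alpha\}$ plus the inverse function theorem) is also only sketched --- you must select curves whose geodesic representatives have locally stable combinatorics so the lengths are genuinely $C^1$, and actually prove the spanning claim --- but that is precisely the content of \cite[Theorem~3]{Fu}; the missing idea in your write-up is the splitting-detection mechanism at the lower strata.
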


A slice $QD_1^\mu(S)$ can be seen as a family of Teichm\"{u}ller geodesics with a common ``direction". The horizontal flat grafting deformation along $\alpha$ is an automorphism of $QD_1^\mu(S)$ that pulls $\alpha$ straight and shortens the flat length of $\alpha$. It would be interesting to consider the dynamics of horizontal flat grafting as well as its interplay with the Teichm\"{u}ller geodesic flow and the horocyclic flow. An equally important viewpoint is to consider the flat grafting rays inside the moduli space for quadratic differentials or the induced ray in the moduli space, but that is beyond the scope of this paper. 

This paper is motivated by the question: are compact sets of flat surfaces (induced by $QD(S)$) length rigid with respect to a finite set of closed curves? As part of the Author's thesis work \cite{Fu}, the question is answered for compact sets within a single stratum. The obstruction coming from the strata structure is illustrated in the choices involved in splitting a cone point into multiple cone points. Flat grafting is constructed in order to simultaneously split cone points and maintain control over lengths of curves. The construction and the properties are aimed at providing analogies with the earthquake deformation in \cites{Ker,Thu,Bon} and the grafting operation in \cites{DumWol,ScaWol}. 

\noindent \textbf{Acknowledgement.} The author thanks Chris Leininger, Spencer Dowdall, Howard Masur, Jayadev Athreya, Matthew Stover, and many other people for helpful discussions over the development of the concept of flat grafting. The author would also like to thank UIUC, Temple University, and NTU for their excellent research environment.


\section{Quadratic differentials and flat metrics}\label{BACKG}

A \emph{quadratic differential} on a Riemann surface is a local structure of the form $\varphi dz^2$, where $\varphi$ is a holomorphic function. On each local chart, there is a well-defined vertical and horizontal direction that are respected by the transition maps. Furthermore, by integrating a choice of square root of $\varphi$ we can obtain an Euclidean structure with the transition maps being semi-translations. An important class of quadratic differentials comes from squaring an abelian differential, see \cite{Str}.

We use $QD(S)$ to denote the space of marked quadratic differentials and $QD_1(S)$ to denote the space of unit-area marked quadratic differentials. By forgetting the vertical direction, we obtain $\mathsf{Flat}(S)$, the space of Euclidean cone metrics induced by quadratic differentials. Quadratic differentials, abelian differentials, and the induced metric appear in many contexts and are studied in many forms, see \cites{AEZ,EMZ,MasTab,SmiWei,Zor}.

The spaces $QD(S)$ and $\mathsf{Flat}(S)$ are both naturally stratified by the cone point, cone angle, and holonomy structure. For example, a structure $\sigma=(4\pi,4\pi;-1)$ describes a total of 2 cone points of cone angle $4\pi$ and the holonomy group being $\{\pm 1\}$. A stratum of $QD(S)$, denoted by $QD(S,\sigma)$, is the collection of quadratic differentials with the strata structure $\sigma$.

A convenient way to represent a quadratic differential on $S$ is via a \emph{polygons with gluing} representation. A polygons with gluing representation is a finite collection of Euclidean polygons embedded in the Euclidean plane with a pairing of all oriented edges. If $P$ is a representative of $q\in QD(S)$, then the topological surface obtained by identifying the pairs is $S$, any pair of edges in the pairing has the same length and slope, and the number of points with cone angle $\pi$ is at most $p$. Two polygons with gluing representations are equivalent if there exists a common refinement such that the natural isometry between polygons is homotopic to the identity map on $S$.

The polygons with gluing representation of a quadratic differential makes the action of $GL(2,\mathbb{R})$ on $QD(S)$ natural. We use the action when normalizing the area of a quadratic differential. The space $\mathsf{Flat}(S)$ can also be defined to be $QD(S) / {{\cos\theta\ \ -\sin\theta}\choose{\sin\theta \ \ \ \cos\theta}}$. The action of $SL(2,\mathbb{R})$ is studied extensively in \cites{EMM,Wri}.

Denote the space of homotopy classes of (non-peripheral, non-nullhomotopic) simple closed curves on $S$ by $\mathcal{S}(S)$. We use $\mathcal{MF}(S)$ ($\mathcal{PMF}(S)$) to denote the space of (projective) measured foliations classes. A measured foliation is a singular foliation on the surface with finitely many singularities and a transverse measure defined. Abusing notation, I will treat a simple closed curve as a homotopy class and a measured foliation as a class of measured foliations that are equivalent under isotopy and Whitehead moves. Projective measured foliation classes are the result of applying a further quotient by positive constant multiples of the measure. Any measured foliation can be realized as a foliation of a quadratic differential via vertical lines, equipped with $|dx|$ as the measure.

The work of Thurston \cite{FLP} illustrates how $\mathcal{MF}(S)$ is seen as the limit of weighted simple closed curves with respect to the intersection number. The intersection function extends continuously to pairs of measured foliations.
\[
\iota : \mathcal{MF}(S) \times \mathcal{MF}(S) \to \mathbb{R}_{\geq 0}.
\]
We say that a pair of distinct measured foliations $(\mu,\nu)$ is \emph{filling} if for any simple closed curve $\alpha \in \mathcal{S}(S)$, 
\[
\iota(\alpha,\mu) + \iota(\alpha,\nu) > 0.
\]
Measured foliations (and the equivalent measured laminations) play important roles in Teichm\"{u}ller theory and low-dimensional topology, see \cites{BonOta,HubMas,Mas,Thu2} for a few applications.

The work of Gardiner-Masur \cite{GarMas} shows that the space of quadratic differentials is homeomorphic to the space of pairs of filling measured foliations $\Omega$. 
\[
\Omega = \{(\mu,\nu) \in \mathcal{MF}(S) \times \mathcal{MF}(S) \mid (\mu,\nu) \text{ is filling}\}.
\] 
The correspondence is obtained by observing the horizontal and vertical foliations of the quadratic differential. Hence a unit-area quadratic differential maps to a pair of filling measured foliations that have intersection number equal to one.
\[
\Omega_1 = \{(\mu,\nu) \in \mathcal{MF}(S) \times \mathcal{MF}(S) \mid (\mu,\nu) \text{ is filling}, \iota(\mu,\nu) = 1\}.
\] 
We will denote this correspondence by 
\[
\mathcal{F} : QD(S) \to \Omega \subset \mathcal{MF}(S) \times \mathcal{MF}(S),\ \mathcal{F}(q) =(\mathcal{F}_H(q), \mathcal{F}_V(q)).
\]
The topology on the space of measured foliations can be given by intersection number with simple closed curves. The topology of $QD(S)$ is given by the product topology obtained from $\mathcal{MF}(S)$.

Recall that the slice $QD^\mu(S)$ is defined to be the set of quadratic differentials with a prescribed horizontal foliation. In other words,
\[
QD^\mu(S) = \Omega \cap (\mu, \mathcal{MF}(S)).
\]
The Euclidean cone metrics induced by quadratic differentials in $QD^\mu(S)$ is bijectively equivalent to $QD^\mu(S)$ since rotation changes the horizontal foliation. The slices $QD^{\mu}(S)$, $\mu\in\mathcal{MF}(S)$, form a foliation of $QD(S)$ and, abusing notation, a quadratic differential in a slice is identified with the induced metric in $\mathsf{Flat}(S)$. 

For a fixed $q \in QD(S)$, any simple closed curve $\alpha \in \mathcal{S}(S)$ has a geodesic representative (with respect to the induced metric in $\mathsf{Flat}(S)$) that consists of a sequence of \emph{saddle connections}, line segments connecting cone points or line segments that form a closed curve. The geodesic representative of a simple closed curve may not be simple as self-intersections can occur at cone points or repeated saddle connections. A sequence of saddle connections is the geodesic representative of a closed curve if consecutive saddle connections satisfy the \emph{angle condition} at the cone point in between. The angle condition requires that the angle on both sides of the curve is greater or equal to $\pi$ (only one side is required at marked points).

The set of \emph{cylinder curves} is the set of simple closed curves that does not have a unique geodesic representative. If $\alpha$ is a cylinder curve, then the corresponding cylinder $C_\alpha$ is the union of the geodesic representatives of $\alpha$. The sequence of saddle connections for a cylinder curve can be chosen to be either boundary of the cylinder. The length of a simple closed curve $\alpha$ refers to the length of a geodesic representative of $\alpha$, denoted by $\ell_q(\alpha)$ as it depends on $q$.

We choose to describe/distinguish simple closed curves and measured foliations using the following proposition.

\begin{prop}\label{PropInt}
The intersection function $\iota_{\mathcal{S}(S)} : \mathcal{MF}(S) \to \mathbb{R}^{\mathcal{S}(S)}_{\geq 0}$ is injective.
\end{prop}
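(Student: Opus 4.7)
The plan is to reduce the injectivity claim to the separation property of the extended intersection pairing on $\mathcal{MF}(S)\times\mathcal{MF}(S)$ and then invoke train-track coordinates.

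Suppose $\mu,\nu\in\mathcal{MF}(S)$ satisfy $\iota(\mu,\alpha)=\iota(\nu,\alpha)$ for every $\alpha\in\mathcal{S}(S)$. By homogeneity of $\iota$ in the second variable, the equality persists for every positively weighted simple closed curve. Thurston's density theorem (see \cite{FLP}) states that weighted simple closed curves are dense in $\mathcal{MF}(S)$, and the intersection form extends continuously to $\mathcal{MF}(S)\times\mathcal{MF}(S)$, so the previous identity upgrades to $\iota(\mu,\lambda)=\iota(\nu,\lambda)$ for every $\lambda\in\mathcal{MF}(S)$.

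To conclude $\mu=\nu$ I would use train-track coordinates on $\mathcal{MF}(S)$. Pick a maximal birecurrent train track $\tau$ carrying $\mu$. The branch weights parametrizing $\mu$ inside the polyhedral cone $V(\tau)\subset\mathcal{MF}(S)$ of foliations carried by $\tau$ can be recovered as intersection numbers $\iota(\mu,\cdot)$ against a finite collection of measured foliations transverse and dual to the branches of $\tau$. The previous step yields equal values for $\nu$ against every such dual foliation, so $\nu$ is carried by $\tau$ with the same branch weights, forcing $\mu=\nu$.

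The main obstacle is the passage from the hypothesis, which is phrased only in terms of simple closed curves, to intersection data against the branch-dual foliations needed for train-track coordinates; Thurston's density result is precisely what bridges this gap. An alternative route would be to realize $\mu$ and $\nu$ as horizontal foliations of quadratic differentials via Hubbard-Masur and invoke uniqueness there, but this requires choosing a conformal structure and is less self-contained.
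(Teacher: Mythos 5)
The paper itself does not prove Proposition~\ref{PropInt}; it is invoked as a classical fact of Thurston (implicitly \cite{FLP}), and the nearest thing to an argument in the paper is Lemma~\ref{MFrigid} in Section~\ref{RIGID}, which proves the stronger statement that a \emph{finite} set of $9g+3p-9$ simple closed curves separates points of $\mathcal{MF}(S)$ using Dehn--Thurston-type coordinates: pants curves recover the numbers $\iota(\mu,\alpha_i)$, and transverse curves recover the twisting parameters. Your proposal takes a genuinely different, train-track route, so it must be judged on its own internal logic.

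There is a genuine gap at the final step. From $\iota(\mu,\lambda)=\iota(\nu,\lambda)$ for all $\lambda\in\mathcal{MF}(S)$ and a complete train track $\tau$ carrying $\mu$, you conclude that ``$\nu$ is carried by $\tau$ with the same branch weights.'' Nothing in the argument shows that $\nu$ is carried by $\tau$ at all: agreement of the finitely many intersection numbers against branch-dual data only pins down a point of the chart $V(\tau)$ if $\nu$ is already known to lie in that chart, and a foliation not carried by $\tau$ could a priori return the same finite list of numbers. The standard repair is to first extract $\iota(\mu,\nu)=\iota(\mu,\mu)=0$ from your step 2, conclude that $\mu$ and $\nu$ have no transverse intersection and hence are \emph{simultaneously} carried by a single complete train track, and only then compare weights; even this needs care for foliations with closed-leaf (annular) components, where $\iota(\mu,\nu)=0$ does not by itself force a common carrier in the naive sense. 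Two secondary concerns: the recovery of individual branch weights as $\iota(\mu,\cdot)$ against ``dual foliations'' is really the Penner--Harer duality between transverse and tangential measures, and one must check that enough tangential measures are realized by honest measured foliations for the resulting linear functionals to span; and the appeal to joint continuity of $\iota$ together with density of weighted simple closed curves risks circularity unless the topology on $\mathcal{MF}(S)$ is fixed independently of the map $\iota_{\mathcal{S}(S)}$ whose injectivity is being proved (train-track charts do this, but that should be said). The route of Lemma~\ref{MFrigid} avoids all of these issues by staying entirely within intersection numbers against simple closed curves.
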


In other words, a simple closed curve or a measured foliation is uniquely determined by the intersection number with all simple closed curves. As a result any quadratic differential is uniquely determined by $\iota_{\mathcal{S}(S)}\circ\mathcal{F}_H$ and $\iota_{\mathcal{S}(S)}\circ\mathcal{F}_V$.


\section{Examples of flat grafting}\label{EXAMP}

We begin by giving some examples of flat grafting maps on the torus. We consider a standard square torus $T$ obtained by $\mathbb{R}^2 / \mathbb{Z}^2$. We describe a type of flat grafting map called a \emph{horizontal flat grafting} along a simple closed curve. Suppose that $\alpha$ is a simple closed curve on $T$, represented by an integral vector $(m,n)$ with $\operatorname{gcd}(m,n)=1$. In the case when $n\neq 0$, we cut along $\alpha$ and glue in a cylinder, see Figure~\ref{ex1}. Geometrically, the cylinder is represented by a parallelogram of sides $(1,0)$ and $(m,n)$. The top side and the bottom side (corresponding to $(1,0)$) are identified by translation. The left side and the right side are identified with the two boundary components we obtain when cutting along $\alpha$. The process above is called the horizontally flat grafting of $q$ along $\alpha$ for time $1$.

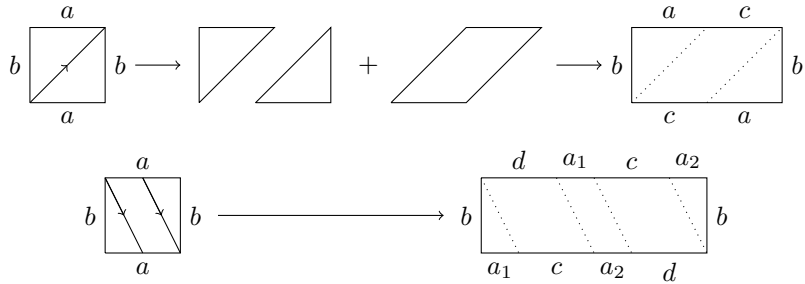
\begin{figure}[ht]
\begin{tikzpicture}
\draw (0,0) -- (0.5,0) node[below]{$a$} -- (1,0) -- (1,0.5) node[right]{$b$} -- (1,1) -- (0.5,1) node[above]{$a$} -- (0,1) -- (0,0.5) node[left]{$b$} -- (0,0);
\draw [->] (0,0) -- (0.5,0.5);
\draw (0.5,0.5) -- (1,1);

\draw [->] (1.4,0.5) -- (2,0.5);

\draw (2.25,0) -- (2.75,0.5) -- (3.25,1) -- (2.75,1) -- (2.25,1) -- (2.25,0.5) -- (2.25,0);
\draw (3,0) -- (3.5,0) -- (4,0) -- (4,0.5) -- (4,1) -- (3.5,0.5) -- (3,0);

\draw (4.8,0) -- (5.3,0) -- (5.8,0) -- (6.3,0.5) -- (6.8,1) -- (6.3,1) -- (5.8,1) -- (5.3,0.5) -- (4.8,0);
\draw (4.5,0.5) node{+};

\draw [->] (7,0.5) -- (7.6,0.5);

\draw (8,0) -- (8.5,0) node[below]{$c$} -- (9.5,0) node[below]{$a$} -- (10,0) -- (10,0.5) node[right]{$b$} -- (10,1) -- (9.5,1) node[above]{$c$} -- (8.5,1) node[above]{$a$} -- (8,1) -- (8,0.5) node[left]{$b$} -- (8,0);
\draw [dotted] (8,0) -- (9,1);
\draw [dotted] (9,0) -- (10,1);

\draw (1,-2) -- (1.5,-2) node[below]{$a$} -- (2,-2) -- (2,-1.5) node[right]{$b$} -- (2,-1) -- (1.5,-1) node[above]{$a$} -- (1,-1) -- (1,-1.5) node[left]{$b$} -- (1,-2);
\draw [->] (1,-1) -- (1.25,-1.5);
\draw (1,-1) -- (1.5,-2);
\draw [->] (1.5,-1) -- (1.75,-1.5);
\draw (1.5,-1) -- (2,-2);

\draw [->] (2.5,-1.5) -- (5.5,-1.5);

\draw (6,-1) -- node[left]{$b$} (6,-2) -- node[below]{$a_1$} (6.5,-2) -- node[below]{$c$} (7.5,-2) -- node[below]{$a_2$} (8,-2) -- node[below]{$d$} (9,-2) -- node[right]{$b$} (9,-1) -- node[above]{$a_2$} (8.5,-1) -- node[above]{$c$} (7.5,-1) -- node[above]{$a_1$} (7,-1) -- node[above]{$d$} (6,-1);

\draw [dotted] (6,-1) -- (6.5,-2);
\draw [dotted] (7,-1) -- (7.5,-2);
\draw [dotted] (7.5,-1) -- (8,-2);
\draw [dotted] (8.5,-1) -- (9,-2);

\end{tikzpicture}
\caption{Examples of flat grafting with $(m,n) = (1,1)$ and $(1,-2)$.}
\label{ex1}
\end{figure}

We extend the horizontal flat grafting to the case when $n=0$. Observe that the degenerate case can be defined in two ways: shearing the rectangle to the left or to the right. The choice implies that horizontal flat grafting maps should not extend continuously to grafting along measured foliations. Since we are working to provide analogs of earthquake and grafting deformations, we choose the shearing in degenerate cases so that we actually see a left-earthquake, see Figure~\ref{ex2}. 

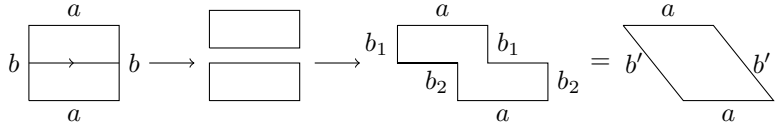
\begin{figure}[ht]
\begin{tikzpicture}
\draw (0,0) -- (0.6,0) node[below]{$a$} -- (1.2,0) -- (1.2,0.5) node[right]{$b$} -- (1.2,1) -- (0.6,1) node[above]{$a$} -- (0,1) -- (0,0.5) node[left]{$b$} -- (0,0);
\draw [->] (0,0.5) -- (0.6,0.5);
\draw (0.6,0.5) -- (1.2,0.5);

\draw [->] (1.6,0.5) -- (2.2,0.5);

\draw (2.4,0) -- (3.6,0) -- (3.6,0.5) -- (2.4,0.5) -- (2.4,0);
\draw (2.4,0.7) -- (3.6,0.7) -- (3.6,1.2) -- (2.4,1.2) -- (2.4,0.7);

\draw [->] (3.8,0.5) -- (4.4,0.5);

\draw (4.9,1) -- (5.5,1) node[above]{$a$} -- (6.1,1) -- (6.1,0.75) node[right]{$b_1$} -- (6.1,0.5) -- (6.9,0.5) -- (6.9,0.25) node[right]{$b_2$} -- (6.9,0) -- (6.3,0) node[below]{$a$} -- (5.7,0) -- (5.7,0.25) node[left]{$b_2$} -- (5.7,0.5) -- (4.9,0.5) -- (5.7,0.5) -- (4.9,0.5) -- (4.9,0.75) node[left]{$b_1$} -- (4.9,1) ;

\draw (7.6,0.5) node{=};

\draw (7.9,1) -- (8.5,1) node[above]{$a$} -- (9.1,1) -- (9.5,0.5) node[right]{$b'$} -- (9.9,0) -- (9.3,0) node[below]{$a$} -- (8.7,0) -- (8.3,0.5) node[left]{$b'$} -- (7.9,1) ;

\end{tikzpicture}
\caption{Example of flat grafting along a horizontal curve.}
\label{ex2}
\end{figure}

For a surface $S$ with $g\geq 2$, we first see that the same construction on the torus can be applied to cylinder curves. Let $q$ be a quadratic differential with a polygons with gluing representative $P$. If $\alpha$ is a cylinder curve, then $C_\alpha$ is a parallelogram in $P$ with $\alpha$ homotopic to the boundary of $C_\alpha$. We cut along a geodesic representative of $\alpha$ and glue in a cylinder. Notice that the result is the same regardless of which geodesic representative of $\alpha$ we cut. See Figure~\ref{ex3} for an example.

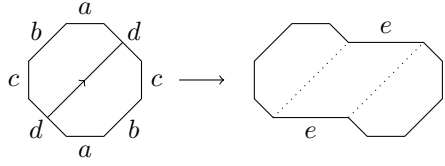
\begin{figure}[ht]
\begin{tikzpicture}

\draw (0.5,0) -- (0.75,0) node[below]{$a$} -- (1,0) -- (1.5,0.5) -- (1.5,0.75) node[right]{$c$} -- (1.5,1) -- (1,1.5) -- (0.75,1.5) node[above]{$a$} -- (0.5,1.5) -- (0,1) -- (0,0.75) node[left]{$c$} -- (0,0.5) -- (0.5,0);

\draw (0.1,0.1) node{$d$};
\draw (1.4,1.4) node{$d$};
\draw (1.4,0.1) node{$b$};
\draw (0.1,1.4) node{$b$};

\draw [->] (0.25,0.25) -- (0.75,0.75);
\draw (0.75,0.75) -- (1.25,1.25);

\draw [->] (2,0.75) -- (2.6,0.75);

\draw (4.5,0) -- (4.75,0) -- (5,0) -- (5.25,0.25) -- (5.5,0.5) -- (5.5,0.75) -- (5.5,1) -- (5.25,1.25) -- (4.75,1.25) node[above]{$e$} -- (4.25,1.25) -- (4,1.5) -- (3.75,1.5) -- (3.5,1.5) -- (3.25,1.25) -- (3,1) -- (3,0.75) -- (3,0.5) -- (3.25,0.25) -- (3.75,0.25) node[below]{$e$} -- (4.25,0.25) -- (4.5,0);

\draw [dotted] (3.25,0.25) -- (4.25,1.25);
\draw [dotted] (4.25,0.25) -- (5.25,1.25);

\end{tikzpicture}
\caption{Example of flat grafting along a cylinder curve.}
\label{ex3}
\end{figure}

We provide three more examples corresponding to cases when $\alpha$ is not a cylinder curve. The number of parallelograms we glue in after cutting along $\alpha$ is equal to the number of saddle connections in the geodesic representative of $\alpha$. The parallelograms are glued by isometries that respect the orientation. See Figure~\ref{ex456} for the three examples. The examples take a quadratic differential in the stratum $QD(S,(6\pi;1))$ to quadratic differentials in the strata $QD(S,(4\pi,4\pi;1))$, $QD(S,(4\pi,3\pi,3\pi;1))$, and $QD(S,(4\pi,3\pi,3\pi;-1))$ respectively.

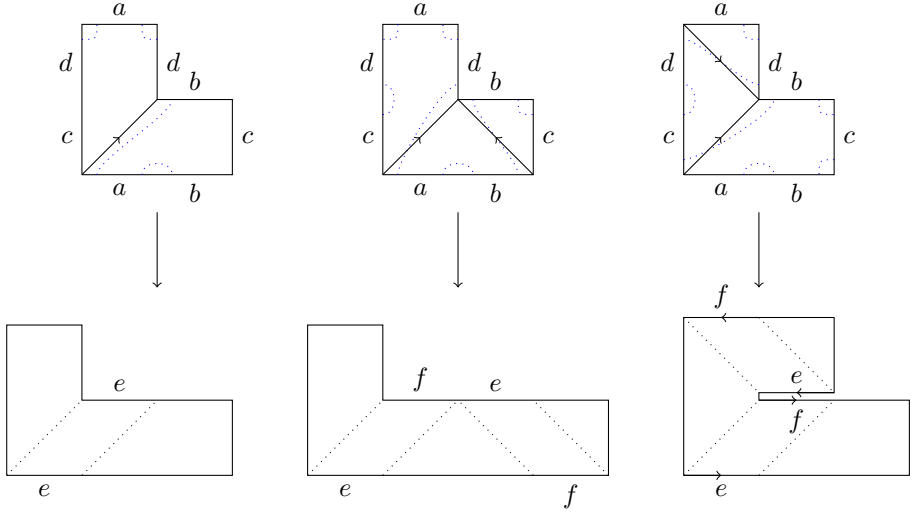
\begin{figure}[ht]
\begin{tikzpicture}

\draw (0,10) -- node[below]{$a$} (1,10) -- node[below]{$b$} (2,10) -- node[right]{$c$} (2,11) -- node[above]{$b$} (1,11) -- node[right]{$d$} (1,12) -- node[above]{$a$} (0,12) -- node[left]{$d$} (0,11) -- node[left]{$c$} (0,10);

\draw[dotted,blue] (0,11.8) .. controls (0.2,11.8) and (0.2,11.8) .. (0.2,12);
\draw[dotted,blue] (1,11.8) .. controls (0.8,11.8) and (0.8,11.8) .. (0.8,12);
\draw[dotted,blue] (0.8,10) .. controls (0.8,10.2) and (1.2,10.2) .. (1.2,10);
\draw[dotted,blue] (0.2,10) .. controls (0.2,10.2) and (1.2,10.8) .. (1.2,11);

\draw [->] (0,10) -- (0.5,10.5);
\draw (0.5,10.5) -- (1,11);

\draw [->] (1,9.5) -- (1,8.5);

\draw (-1,6) -- node[below]{$e$} (0,6) -- (1,6) -- (2,6) -- (2,7) -- (1,7) -- node[above]{$e$} (0,7) -- (0,8) -- (-1,8) -- (-1,7) -- (-1,6);
\draw [dotted] (-1,6) -- (0,7);
\draw [dotted] (0,6) -- (1,7);

\draw (4,10) -- node[below]{$a$} (5,10) -- node[below]{$b$} (6,10) -- node[right]{$c$} (6,11) -- node[above]{$b$} (5,11) -- node[right]{$d$} (5,12) -- node[above]{$a$} (4,12) -- node[left]{$d$} (4,11) -- node[left]{$c$} (4,10);

\draw[dotted,blue] (4.2,10) .. controls (4.2,10.2) and (4.8,11.2) .. (5,11.2);
\draw[dotted,blue] (4.2,12) .. controls (4.2,11.8) and (4.2,11.8) .. (4,11.8);
\draw[dotted,blue] (5,11.8) .. controls (4.8,11.8) and (4.8,11.8) .. (4.8,12);
\draw[dotted,blue] (4.8,10) .. controls (4.8,10.2) and (5.2,10.2) .. (5.2,10);
\draw[dotted,blue] (5.2,11) .. controls (5.2,10.8) and (5.8,10.2) .. (5.8,10);
\draw[dotted,blue] (5.8,11) .. controls (5.8,10.8) and (5.8,10.8) .. (6,10.8);
\draw[dotted,blue] (4,10.8) .. controls (4.2,10.8) and (4.2,11.2) .. (4,11.2);

\draw [->] (4,10) -- (4.5,10.5);
\draw (4.5,10.5) -- (5,11);
\draw [->] (6,10) -- (5.5,10.5);
\draw (5.5,10.5) -- (5,11);

\draw [->] (5,9.5) -- (5,8.5);

\draw (3,6) -- node[below]{$e$} (4,6) -- (5,6) -- (6,6) -- node[below]{$f$} (7,6) -- (7,7) -- (6,7) -- node[above]{$e$} (5,7) -- node[above]{$f$} (4,7) -- (4,8) -- (3,8) -- (3,7) -- (3,6);
\draw [dotted] (3,6) -- (4,7);
\draw [dotted] (4,6) -- (5,7);
\draw [dotted] (6,6) -- (5,7);
\draw [dotted] (7,6) -- (6,7);

\draw (8,10) -- node[below]{$a$} (9,10) -- node[below]{$b$} (10,10) -- node[right]{$c$} (10,11) -- node[above]{$b$} (9,11) -- node[right]{$d$} (9,12) -- node[above]{$a$} (8,12) -- node[left]{$d$} (8,11) -- node[left]{$c$} (8,10);

\draw[dotted,blue] (8,10.2) .. controls (8.2,10.2) and (9.2,10.8) .. (9.2,11);
\draw[dotted,blue] (10,10.2) .. controls (9.8,10.2) and (9.8,10.2) .. (9.8,10);
\draw[dotted,blue] (9.8,11) .. controls (9.8,10.8) and (9.8,10.8) .. (10,10.8);
\draw[dotted,blue] (8,10.8) .. controls (8.2,10.8) and (8.2,11.2) .. (8,11.2);
\draw[dotted,blue] (9,11.2) .. controls (8.8,11.2) and (8.2,11.8) .. (8,11.8);
\draw[dotted,blue] (9,11.8) .. controls (8.8,11.8) and (8.8,11.8) .. (8.8,12);
\draw[dotted,blue] (8.8,10) .. controls (8.8,10.2) and (9.2,10.2) .. (9.2,10);

\draw [->] (8,10) -- (8.5,10.5);
\draw (8.5,10.5) -- (9,11);
\draw [->] (8,12) -- (8.5,11.5);
\draw (8.5,11.5) -- (9,11);

\draw [->] (9,9.5) -- (9,8.5);

\draw (8,6) -- node[below]{$e$} (9,6) -- (10,6) -- (11,6) -- (11,7) -- (10,7) -- node[below]{$f$} (9,7) -- (9,7.1) -- node[above]{$e$} (10,7.1) -- (10,8.1) -- (9,8.1) -- node[above]{$f$} (8,8.1) -- (8,7) -- (8,6);
\draw [dotted] (8,8.1) -- (9,7.1);
\draw [dotted] (9,8.1) -- (10,7.1);
\draw [dotted] (8,6) -- (9,7);
\draw [dotted] (9,6) -- (10,7);
\draw [->] (8,6) -- (8.5,6);
\draw [->] (10,7.1) -- (9.5,7.1);
\draw [->] (9,7) -- (9.5,7);
\draw [->] (9,8.1) -- (8.5,8.1);

\end{tikzpicture}
\caption{Examples of flat grafting along non-cylinder curves.}
\label{ex456}
\end{figure}

A horizontal flat grafting specifies the sides of the parallelograms added during the grafting process. A general flat grafting will be formally introduced in Section~\ref{CONST}. Figure~\ref{ex789} shows some examples of non-horizontal flat grafting. The resulting stratum may differ even when grafting along the same simple closed curve. Observe that we can rotate the examples to view them as examples of horizontal flat grafting.

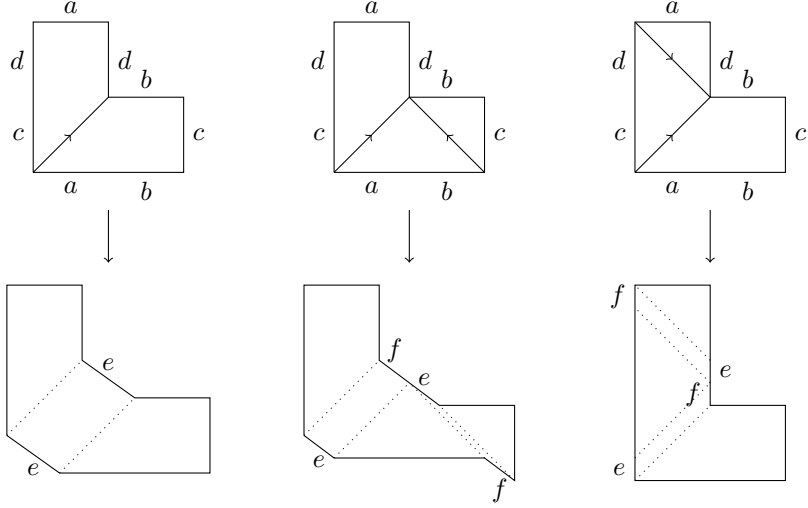
\begin{figure}[ht]
\begin{tikzpicture}

\draw (0,10) -- node[below]{$a$} (1,10) -- node[below]{$b$} (2,10) -- node[right]{$c$} (2,11) -- node[above]{$b$} (1,11) -- node[right]{$d$} (1,12) -- node[above]{$a$} (0,12) -- node[left]{$d$} (0,11) -- node[left]{$c$} (0,10);

\draw [->] (0,10) -- (0.5,10.5);
\draw (0.5,10.5) -- (1,11);

\draw [->] (1,9.5) -- (1,8.8);

\draw (-0.35,6.5) -- node[below]{$e$} (0.35,6) -- (1.35,6) -- (2.35,6) -- (2.35,7) -- (1.35,7) -- node[above]{$e$} (0.65,7.5) -- (0.65,8.5) -- (-0.35,8.5) -- (-0.35,7.5) -- (-0.35,6.5);
\draw [dotted] (-0.35,6.5) -- (0.65,7.5);
\draw [dotted] (0.35,6) -- (1.35,7);

\draw (4,10) -- node[below]{$a$} (5,10) -- node[below]{$b$} (6,10) -- node[right]{$c$} (6,11) -- node[above]{$b$} (5,11) -- node[right]{$d$} (5,12) -- node[above]{$a$} (4,12) -- node[left]{$d$} (4,11) -- node[left]{$c$} (4,10);

\draw [->] (4,10) -- (4.5,10.5);
\draw (4.5,10.5) -- (5,11);
\draw [->] (6,10) -- (5.5,10.5);
\draw (5.5,10.5) -- (5,11);

\draw [->] (5,9.5) -- (5,8.8);

\draw (3.6,6.5) -- node[below]{$e$} (4,6.2) -- (5,6.2) -- (6,6.2) -- node[below]{$f$} (6.4,5.9) -- (6.4,6.9) -- (5.4,6.9) -- node[above]{$e$} (5,7.2) -- node[above]{$f$} (4.6,7.5) -- (4.6,8.5) -- (3.6,8.5) -- (3.6,7.5) -- (3.6,6.5);
\draw [dotted] (3.6,6.5) -- (4.6,7.5);
\draw [dotted] (4,6.2) -- (5,7.2);
\draw [dotted] (6,6.2) -- (5,7.2);
\draw [dotted] (6.4,5.9) -- (5.4,6.9);

\draw (8,10) -- node[below]{$a$} (9,10) -- node[below]{$b$} (10,10) -- node[right]{$c$} (10,11) -- node[above]{$b$} (9,11) -- node[right]{$d$} (9,12) -- node[above]{$a$} (8,12) -- node[left]{$d$} (8,11) -- node[left]{$c$} (8,10);

\draw [->] (8,10) -- (8.5,10.5);
\draw (8.5,10.5) -- (9,11);
\draw [->] (8,12) -- (8.5,11.5);
\draw (8.5,11.5) -- (9,11);

\draw [->] (9,9.5) -- (9,8.8);

\draw (8,6.2) -- node[left]{$e$} (8,5.9) -- (9,5.9) -- (10,5.9) -- (10,6.9) -- (9,6.9) -- node[left]{$f$} (9,7.2) -- node[right]{$e$} (9,7.5) -- (9,8.5) -- (8,8.5) -- node[left]{$f$} (8,8.2) -- (8,7.2) -- (8,6.2);
\draw [dotted] (8,6.2) -- (9,7.2);
\draw [dotted] (8,5.9) -- (9,6.9);
\draw [dotted] (9,7.2) -- (8,8.2);
\draw [dotted] (9,7.5) -- (8,8.5);

\end{tikzpicture}
\caption{Examples of non-horizontal flat grafting.}
\label{ex789}
\end{figure}


\section{Construction and normalization}\label{CONST}

We define the horizontal flat grafting function 
\[
HG : QD(S) \times \mathcal{S}(S) \to QD(S).
\]
Let $q \in QD(S)$ be a fixed quadratic differential and $\alpha \in \mathcal{S}(S)$ be a simple closed curve on $S$. We define $HG(q,\alpha)$ via polygons with gluing representative. Pick a geodesic representative of $\alpha$. Let $s_1, s_2, \ldots, s_m$ be a sequence of saddle connections for the geodesic representative. Give $\alpha$ an orientation and let $s_j = (x_j,y_j)$ be vectors in $\mathbb{R}^2$.

Topologically, we cut the surface along the simple closed curve to get two boundary components, and then we glue in an annulus. Geometrically, we cut the polygons along the saddle connections $s_1$ to $s_m$ to obtain $2m$ boundary saddle connections. Add a parallelogram described by the vectors $(1,0)$ and $s_j$ for each $1 \leq j \leq m$. We glue the left-sides and right-sides of the parallelograms to corresponding boundary edges. We glue the top-sides and bottom-sides to each other according to the orientation of $\alpha$. If $y_j$ and $y_{j+1}$ has the same sign, then the gluing will be a translation. If $y_j$ and $y_{j+1}$ has opposite signs, then the gluing will be a semi-translation, see the third example of Figure~\ref{ex456}. The collection of parallelograms will be called the \emph{grafting cylinder} of the flat grafting along $\alpha$.

If the geodesic representative of $\alpha$ with respect to $q$ contains a horizontal saddle connection, we define the horizontal grafting by a degenerate parallelogram similar to Figure~\ref{ex2}. The horizontal grafting is defined by the limit of rotating $q$ counterclockwise by a small angle, applying the horizontal flat grafting, then rotate back. A geodesic representative of $\alpha$ could contain multiple copies of the same saddle connection. The saddle connections are ordered by relative positioning since $\alpha$ is a simple closed curve. The parallelograms are glued based on the ordering in that case.

\begin{prop}
The construction of $HG : QD(S) \times \mathcal{S}(S) \to QD(S)$ is well-defined.
\end{prop}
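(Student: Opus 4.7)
The plan is to verify that $HG(q,\alpha)$ depends only on the pair $(q,\alpha)$ and not on the auxiliary choices made in the construction. The choices to check are: (i) the polygons-with-gluing representative $P$ of $q$; (ii) the choice of geodesic representative of $\alpha$, which matters only when $\alpha$ is a cylinder curve; (iii) the orientation given to $\alpha$; (iv) the ordering used when the geodesic representative of $\alpha$ traverses the same saddle connection more than once; and (v) the limiting procedure in the degenerate horizontal case. In addition I would verify that the output really is an element of $QD(S)$: all boundary edges of the modified polygon collection pair up into matched pairs of equal length and parallel direction, the underlying marked topological surface is still $S$, and the area is finite.

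For (i), any two polygons-with-gluing representations admit a common refinement. Refinement commutes with the grafting operation in the obvious sense: cutting a polygon along an interior segment and then inserting a grafting parallelogram yields the same surface as inserting the parallelogram first and then cutting. It therefore suffices to pass to a common refinement that contains each $s_j$ as a boundary edge. For (iii), reversing the orientation of $\alpha$ replaces $(s_1,\ldots,s_m)$ by $(-s_m,\ldots,-s_1)$; the parallelogram with sides $(1,0)$ and $-s_j$ is the same parallelogram as the one with sides $(1,0)$ and $s_j$ with the roles of top/bottom and left/right swapped, and the sign pattern of the $y_j$-coordinates is preserved in reverse, so the translation versus semi-translation gluing pattern between consecutive parallelograms is unchanged. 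For (ii), any two geodesic representatives of a cylinder curve $\alpha$ are related by a translation along $C_\alpha$; cutting along a translated core curve and inserting the grafting cylinder produces an isometric surface because the translation extends into the inserted cylinder. For (iv), the statement itself specifies that the cyclic ordering of repeated saddle connections is determined by their relative positions along the simple curve $\alpha$; since $\alpha$ is simple, this cyclic ordering is intrinsic, so there is no residual ambiguity.

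The main obstacle will be case (v), the degenerate horizontal case. When some $s_j = (x_j, 0)$, the parallelogram spanned by $(1,0)$ and $s_j$ collapses, so the construction is defined only as the limit of $R_{-\varepsilon}\circ HG(R_\varepsilon q,\alpha)$ as $\varepsilon\to 0^+$, where $R_\varepsilon$ denotes counterclockwise rotation by $\varepsilon$. I would show this limit exists by writing an explicit polygon representation of $HG(R_\varepsilon q,\alpha)$ in which the attaching data depend continuously on $\varepsilon$, observe that no combinatorial change occurs for small $\varepsilon>0$ (all previously non-horizontal saddle connections of $\alpha$ remain non-horizontal, and each previously horizontal $s_j$ contributes a thin parallelogram that degenerates to the prescribed left-sheared segment), and then take the $\varepsilon\to 0^+$ limit in the polygon data. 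Because all horizontal saddle connections of $\alpha$ are rotated simultaneously by the same $R_\varepsilon$, the sheared segments produced in the limit are consistent with one another, and the resulting degenerate strip is exactly the one drawn in Figure~\ref{ex2}. Finally I would check that this limit is itself independent of choices (i)--(iv) by applying the arguments above to $HG(R_\varepsilon q,\alpha)$ for each small $\varepsilon>0$ and passing to the limit, which completes the proof that $HG$ is well defined.
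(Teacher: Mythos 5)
Your proposal is correct and, on the two choices the paper actually checks, it follows the same strategy: independence of the polygons-with-gluing representative via a common refinement, and independence of the geodesic representative in the cylinder-curve case via the translation symmetry of $C_\alpha$. You go further than the paper's proof, which confines itself to those two points: you also verify orientation-independence, the intrinsic ordering of repeated saddle connections, and, most substantively, the existence of the $\varepsilon\to 0^+$ limit defining the degenerate horizontal case --- the last of these is genuinely left implicit in the paper and is worth writing out, so your extra care there is a strength rather than a detour.
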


\begin{proof}
Consider $HG(q,\alpha)$ with $q \in QD(S)$ and $\alpha\in \mathcal{S}(S)$. We check if the choices made during the construction affect our result. If there are more than one geodesic representative for $\alpha$, then $\alpha$ is a cylinder curve with respect to $q$. In this case $\alpha$ remains a cylinder curve with respect to $HG(q,\alpha)$. We see that the process applied to any geodesic representative will result in the same cylinder up to cutting and gluing.

Therefore the only other choice we made was the polygons with gluing representative of $q$. Suppose we have two different representatives of $q$. There exists a common refinement of the two representatives. We take all edges in both set of polygons and cut into smaller pieces of polygons. The geodesic representative in the refinement must correspond and hence the result of the construction must be the same.
\end{proof}

We add a parameter $t$ to describe the horizontal width of parallelograms being glued in. Hence the map $HG_t$ is also well-defined for all $t\geq 0$. We use $VG_t : QD(S) \times \mathcal{S}(S) \to QD(S)$ to denote the vertical grafting defined similarly except with vertical pieces. The most general form of flat grafting will be denoted $F_u$, where $u \in \mathbb{R}^2$ is the vector that both describes the direction and the width of the grafting. Hence $HG_t = F_{(t,0)}$ and $VG_t = F_{(0,t)}$. A general grafting is equivalent to conjugating a horizontal grafting by a rotation action, and therefore automatically well-defined. An inverse horizontal flat grafting map corresponding to negative values of $t$ can be constructed but the inverse map generically leads to pinching and degeneration of the quadratic differential.

Restricting the flat grafting map to unit-area quadratic differentials $QD_1(S)$, we define a normalized horizontal flat grafting map
\[
\overline{HG}_t : QD_1(S) \times \mathcal{S}(S) \to QD_1(S).
\]
Instead of scaling by the area of the resulting quadratic differential, we only scale the horizontal width, that is, 
\[
\overline{HG}_t (q,\alpha)= {{1/A\ \ 0}\choose{\ 0 \ \ \ \ 1}}HG_t(q,\alpha), A=\text{area of }HG_t(q,\alpha). 
\]
See Figure~\ref{normhf} for examples of normalized horizontal flat grafting. Similarly $\overline{VG}$ and $\overline{F}$ denote normalized vertical flat grafting and general flat grafting, respectively. 

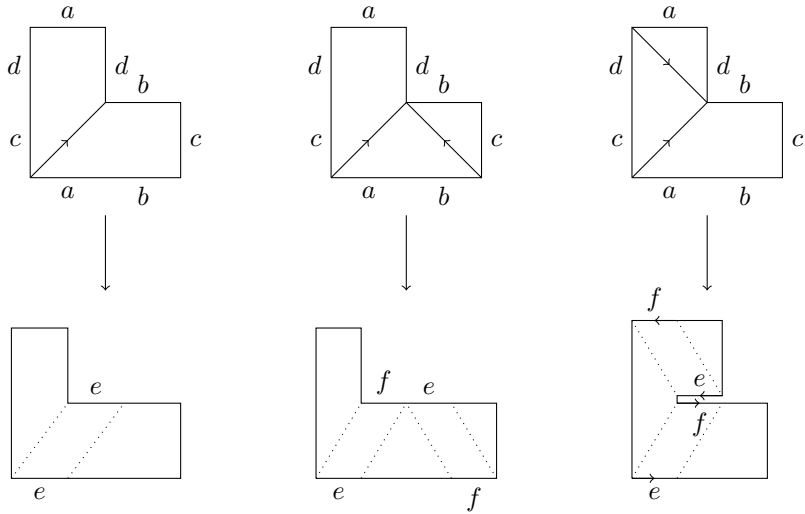
\begin{figure}[ht]
\begin{tikzpicture}

\draw (0,10) -- node[below]{$a$} (1,10) -- node[below]{$b$} (2,10) -- node[right]{$c$} (2,11) -- node[above]{$b$} (1,11) -- node[right]{$d$} (1,12) -- node[above]{$a$} (0,12) -- node[left]{$d$} (0,11) -- node[left]{$c$} (0,10);

\draw [->] (0,10) -- (0.5,10.5);
\draw (0.5,10.5) -- (1,11);

\draw [->] (1,9.5) -- (1,8.5);

\draw (-0.25,6) -- node[below]{$e$} (0.5,6) -- (1.25,6) -- (2,6) -- (2,7) -- (1.25,7) -- node[above]{$e$} (0.5,7) -- (0.5,8) -- (-0.25,8) -- (-0.25,7) -- (-0.25,6);
\draw [dotted] (-0.25,6) -- (0.5,7);
\draw [dotted] (0.5,6) -- (1.25,7);

\draw (4,10) -- node[below]{$a$} (5,10) -- node[below]{$b$} (6,10) -- node[right]{$c$} (6,11) -- node[above]{$b$} (5,11) -- node[right]{$d$} (5,12) -- node[above]{$a$} (4,12) -- node[left]{$d$} (4,11) -- node[left]{$c$} (4,10);

\draw [->] (4,10) -- (4.5,10.5);
\draw (4.5,10.5) -- (5,11);
\draw [->] (6,10) -- (5.5,10.5);
\draw (5.5,10.5) -- (5,11);

\draw [->] (5,9.5) -- (5,8.5);

\draw (3.8,6) -- node[below]{$e$} (4.4,6) -- (5,6) -- (5.6,6) -- node[below]{$f$} (6.2,6) -- (6.2,7) -- (5.6,7) -- node[above]{$e$} (5,7) -- node[above]{$f$} (4.4,7) -- (4.4,8) -- (3.8,8) -- (3.8,7) -- (3.8,6);
\draw [dotted] (3.8,6) -- (4.4,7);
\draw [dotted] (4.4,6) -- (5,7);
\draw [dotted] (5.6,6) -- (5,7);
\draw [dotted] (6.2,6) -- (5.6,7);

\draw (8,10) -- node[below]{$a$} (9,10) -- node[below]{$b$} (10,10) -- node[right]{$c$} (10,11) -- node[above]{$b$} (9,11) -- node[right]{$d$} (9,12) -- node[above]{$a$} (8,12) -- node[left]{$d$} (8,11) -- node[left]{$c$} (8,10);

\draw [->] (8,10) -- (8.5,10.5);
\draw (8.5,10.5) -- (9,11);
\draw [->] (8,12) -- (8.5,11.5);
\draw (8.5,11.5) -- (9,11);

\draw [->] (9,9.5) -- (9,8.5);

\draw (8,6) -- node[below]{$e$} (8.6,6) -- (9.2,6) -- (9.8,6) -- (9.8,7) -- (9.2,7) -- node[below]{$f$} (8.6,7) -- (8.6,7.1) -- node[above]{$e$} (9.2,7.1) -- (9.2,8.1) -- (8.6,8.1) -- node[above]{$f$} (8,8.1) -- (8,7) -- (8,6);
\draw [dotted] (8,8.1) -- (8.6,7.1);
\draw [dotted] (8.6,8.1) -- (9.2,7.1);
\draw [dotted] (8,6) -- (8.6,7);
\draw [dotted] (8.6,6) -- (9.2,7);
\draw [->] (8,6) -- (8.3,6);
\draw [->] (9.2,7.1) -- (8.9,7.1);
\draw [->] (8.6,7) -- (8.9,7);
\draw [->] (8.6,8.1) -- (8.3,8.1);

\end{tikzpicture}
\caption{Normalized horizontal flat grafting.}
\label{normhf}
\end{figure}

\noindent \textbf{Remark.} The parallelogram construction in \cites{EMZ,MZ,Boi} is a surgery that can be applied to transport holes generated by a local surgery on a flat surface. The method was developed to study homologous saddle connections and the Siegel-Veech constants. The most general construction can be found in \cite[Section~3]{Boi}. A flat grafting map is a global approach to the parallelogram construction that replaces their local criteria with the structure of simple closed curves. We illustrate this in Section~\ref{STRAT} where we discuss the topic of how grafting changes the strata structure.


\section{Properties of flat grafting}\label{PROPE}

We focus on properties of horizontal flat grafting in this section since general flat grafting can be obtained by conjugating by rotation ${{\cos\theta\ \ -\sin\theta}\choose{\sin\theta \ \ \ \cos\theta}}$. Theorem~\ref{mainprop} is a corollary of Theorem~\ref{mainpropH} and Theorem~\ref{continuityH}.

\begin{theorem}\label{mainpropH}
Let $\overline{HG}_t : QD_1(S) \times \mathcal{S}(S) \to QD_1(S)$ be the normalized horizontal flat grafting map with $t>0$.
\begin{itemize}
\item[(A)] The horizontal foliation $\mathcal{F}_H(q) \in \mathcal{MF}(S)$ is preserved under the map $\overline{HG}_t$.
\item[(B)] $\overline{HG}_t(q,\alpha) = q$ if and only if $\alpha$ is equivalent to the projective class of the vertical foliation $\mathcal{F}_V(q)$ in $\mathcal{PMF}(S)$.
\item[(C)] $\overline{HG}_t(q, \ \cdot\ )$ is injective for all $q\in QD_1(S)$.
\end{itemize}
\end{theorem}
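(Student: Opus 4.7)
The plan is to translate everything into intersection-number identities on $\mathcal{S}(S)$ so that Proposition~\ref{PropInt} does the heavy lifting. The engine is the following formula for the effect of unnormalized grafting on the vertical foliation: for every $\beta\in\mathcal{S}(S)$,
\[
\iota(\beta, \mathcal{F}_V(HG_t(q,\alpha))) = \iota(\beta, \mathcal{F}_V(q)) + t\,\iota(\beta,\alpha),
\]
with the horizontal analogue
\[
\iota(\beta, \mathcal{F}_H(HG_t(q,\alpha))) = \iota(\beta, \mathcal{F}_H(q)).
\]
To establish these I decompose the grafted surface into the grafting cylinder $C$ (the union of the added parallelograms, glued along horizontal edges) and its complement, which is isometric to $q$ cut along the geodesic representative of $\alpha$. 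The key geometric input is that each parallelogram has sides $(t,0)$ and $s_j=(x_j,y_j)$, so the horizontal distance between its two $s_j$-parallel edges is exactly $t$ at every height, while its horizontal sides contribute $0$ to $\int|dy|$. Thus a minimal representative of $\beta$ picks up exactly $t$ units of $\int|dx|$ per topological crossing of $\alpha$ and nothing in the vertical direction; combined with the contribution from the complement the two identities follow. The normalization multiplies horizontal lengths by $1/A$ with $A = 1 + t\,\iota(\alpha,\mathcal{F}_H(q))$, dividing $|dx|$ by $A$ and preserving $|dy|$.

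Part (A) is then immediate from Proposition~\ref{PropInt}. For part (B) the combined identity reads
\[
A\,\iota(\beta,\mathcal{F}_V(\overline{HG}_t(q,\alpha))) = \iota(\beta,\mathcal{F}_V(q)) + t\,\iota(\beta,\alpha). \qquad (\ast)
\]
If $\overline{HG}_t(q,\alpha) = q$, substituting gives $(A-1)\iota(\beta,\mathcal{F}_V(q)) = t\,\iota(\beta,\alpha)$ for every $\beta$; since $\alpha\neq 0$ and $t>0$ we must have $A>1$, and Proposition~\ref{PropInt} yields $\alpha = \tfrac{A-1}{t}\mathcal{F}_V(q)$ in $\mathcal{MF}(S)$, which is the projective equality. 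Conversely, assuming $\mathcal{F}_V(q) = \lambda\alpha$, the area relation $\lambda\,\iota(\alpha,\mathcal{F}_H(q)) = \iota(\mathcal{F}_V(q),\mathcal{F}_H(q)) = 1$ gives $A = (\lambda+t)/\lambda$, and the right-hand side of $(\ast)$ collapses to $A\lambda\,\iota(\beta,\alpha) = A\,\iota(\beta,\mathcal{F}_V(q))$. Hence $\mathcal{F}_V(\overline{HG}_t(q,\alpha)) = \mathcal{F}_V(q)$, which together with part (A) forces $\overline{HG}_t(q,\alpha) = q$.

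For part (C), suppose $\overline{HG}_t(q,\alpha_1) = \overline{HG}_t(q,\alpha_2) = q'$. Subtracting the two instances of $(\ast)$ gives
\[
\tfrac{A_1 - A_2}{t}\,\iota(\beta,\mathcal{F}_V(q')) = \iota(\beta,\alpha_1) - \iota(\beta,\alpha_2)
\]
for every $\beta$. If $A_1 = A_2$, Proposition~\ref{PropInt} gives $\alpha_1 = \alpha_2$ at once. Otherwise, without loss of generality $A_1 > A_2$, and setting $c = (A_1 - A_2)/t > 0$ produces the measured-foliation equality $\alpha_1 = \alpha_2 + c\,\mathcal{F}_V(q')$ in $\mathcal{MF}(S)$. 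Now a simple closed curve spans an extreme ray of $\mathcal{MF}(S)$: if $\alpha_1 = \mu + \nu$ in $\mathcal{MF}(S)$ then for any $\gamma$ disjoint from $\alpha_1$ one has $\iota(\gamma,\mu) = \iota(\gamma,\nu) = 0$, forcing $\mu,\nu$ to be supported in an annular neighborhood of $\alpha_1$ where they are cylindrical, so both are scalar multiples of $\alpha_1$. Since $\alpha_2$ and $c\,\mathcal{F}_V(q')$ are both nonzero (the latter because $q'$ has unit area), we get $\alpha_2 = k\alpha_1$ for some $k>0$; but $\alpha_2$ is itself a weight-one simple closed curve, so $k=1$ and $\alpha_1=\alpha_2$, contradicting the case assumption.

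The main obstacle will be verifying the vertical identity $(\ast)$ cleanly in all cases. The ``width $t$ per crossing'' observation is straightforward when $\alpha$ is generic, but the degenerate case where $\alpha$ has horizontal saddle connections (resolved in Section~\ref{CONST} by a rotation limit) together with the possibility that a minimizing representative of $\beta$ runs along saddle connections shared with $\alpha$ will require some care with minimal position on the cut-open surface.
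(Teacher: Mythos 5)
The engine of your argument, the identity
\[
\iota(\beta,\mathcal{F}_V(HG_t(q,\alpha))) \;=\; \iota(\beta,\mathcal{F}_V(q)) + t\,\iota(\beta,\alpha)
\quad\text{for all }\beta\in\mathcal{S}(S),
\]
is false, and parts (B) and (C) rest entirely on it. The problem is precisely the cancellation you defer to the final paragraph as a technicality: $\iota(\beta,\mathcal{F}_V)$ is the \emph{minimal} total variation $\int|dx|$ over the homotopy class, and the horizontal displacement $\pm t$ that $\beta$ acquires when crossing the grafting cylinder has a sign determined by the direction of the crossing, so it can cancel against horizontal variation $\beta$ already carries. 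A concrete counterexample is the first example of Figure~\ref{ex1}: $q=\mathbb{R}^2/\mathbb{Z}^2$, $\alpha=(1,1)$, $t=1$. The grafted torus is $\mathbb{R}^2/\Lambda$ with $\Lambda$ generated by $u=(2,0)$ and $v=(1,1)$, the images of $(1,0)$ and $(0,1)$. For $\beta=(1,-1)$ one has $\iota(\beta,\mathcal{F}_V(q))=1$ and $\iota(\beta,\alpha)=2$, so your formula predicts $1+1\cdot 2=3$; but the image class is $u-v=(1,-1)$, whose geodesic has $\int|dx|=1$. (The discrepancy survives normalization.) Put differently, the functional $\iota(\cdot,\mathcal{F}_V(q))+t\,\iota(\cdot,\alpha)$ is in general not the intersection functional of any measured foliation. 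The paper itself acknowledges this phenomenon in the proof of Theorem~\ref{continuityH} (``grafting can cause cancellation depending on their relative positions to the grafting cylinder''). Your horizontal identity, and hence part (A), is correct and close to the paper's homotopy argument; and the vertical formula does hold for the restricted transversals used in the paper's proof of (C) (horizontal segments $L$, along which $dx$ has constant sign), but not for arbitrary $\beta$.

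Consequently (B) and (C) need genuinely different arguments, which is what the paper supplies: for (B) it treats cases according to whether $\alpha$ has a slanted, horizontal, or vertical geodesic representative and exhibits in each case an explicit curve (or cylinder dimension) that changes; for (C) it uses horizontal and slanted transversal segments to show $\alpha_1$ and $\alpha_2$ share the same multiset of saddle connections, and then first-return maps of the grafting cylinder to recover their cyclic order. Your observation that a simple closed curve spans an extreme ray of $\mathcal{MF}(S)$ is correct and potentially useful, but it only becomes applicable once a valid replacement for your identity $(\ast)$ is established.
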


\begin{proof}
We prove property (A) by considering the intersection function in Proposition~\ref{PropInt}. The horizontal foliation is preserved if and only if the intersection number with every simple closed curve is preserved under the map $\overline{HG}_t$. Note that the intersection number is preserved under normalization, hence we can consider the map $HG_t$. 

Fix $(q,\alpha) \in QD_1(S) \times \mathcal{S}(S)$, $t>0$, and $\gamma \in \mathcal{S}(S)$. The intersection number between $\gamma$ and the horizontal foliation $\mathcal{F}_H(q)$ is equal to $\sum |y_j|$, where $y_j$ is the $y$-value of the $j$-th saddle connection of a geodesic representative of $\gamma$ with respect to $q$. 

A geodesic representative of $\gamma$ with respect to $HG_t(q,\alpha)$ could be composed of different saddle connections when compared with a geodesic representative of $\gamma$ with respect to $q$. We consider decomposing $\gamma$ into arcs $\{A_j\}$ where each arc corresponds to a saddle connection in $q$. If the horizontal foliation $\mathcal{F}_H(HG_t(q,\alpha))$ induces a non-zero measure on the arc $A_j$, then we apply a homotopy to the arc respecting the horizontal foliation such that the interior of the arc is disjoint from the grafting cylinder. Since the measure is preserved under such a homotopy and disjoint arcs have measures induced by $\mathcal{F}_H(q)$, we conclude that the intersection number with $\alpha$, which is the sum of the total measure of the arcs, is preserved.

For property (B) we prove the easy direction first. If $\alpha$ is equivalent to the vertical foliation in $\mathcal{PMF}(S)$, then $\alpha$ is a vertical cylinder curve with $C_\alpha$ being the whole surface. Grafting along $\alpha$ increases the height of the cylinder and normalizing cancels out the increase. Therefore $\overline{HG}_t(q,\alpha) = q$.

Suppose that $\alpha$ is not equivalent to the vertical foliation in $\mathcal{PMF}(S)$. We can show that $\overline{HG}_t(q,\alpha) \neq q$ by finding a simple closed curve $\gamma$ whose length with respect to $q$ and $\overline{HG}_t(q,\alpha)$ are different. Recall that the grafting cylinder is the topological cylinder that is glued in during flat grafting. A geodesic representative of $\alpha$ with respect to $HG_t(q,\alpha)$ has to lie inside the grafting cylinder because of the angle condition, implying that 
\[
\ell_{HG_t(q,\alpha)}(\alpha) \leq \ell_q(\alpha).
\]
We observe that if a geodesic representative of $\alpha$ contains a \emph{slanted} saddle connection (neither horizontal nor vertical), then the length $\ell_{\overline{HG}_t(q,\alpha)}(\alpha)$ for $t>0$ is shorter than $\ell_q(\alpha)$. The inequality 
\[
\ell_{\overline{HG}_t(q,\alpha)}(\alpha) < \ell_{HG_t(q,\alpha)}(\alpha)
\]
is immediate due to the normalization strictly shortening a slanted saddle connection. 

This leaves us with the case of $\alpha$ having geodesic representatives consisting of only horizontal or vertical saddle connections. A similar argument works if a geodesic representative of $\alpha$ has both a horizontal and a vertical saddle connection.

If $\alpha$ is horizontal, we consider a cylinder curve $\gamma$ that intersects $\alpha$ and has negative slope. The existence of $\gamma$ follows from the denseness of cylinder curves, see \cite{Masur2}. The length of $\gamma$ with respect to $HG_t(q,\alpha)$ is strictly greater than $\ell_q(\gamma)$ due to the left twist. The area is preserved and hence 
\[
\ell_q(\gamma) < \ell_{HG_t(q,\alpha)}(\gamma) = \ell_{\overline{HG}_t(q,\alpha)}(\gamma).
\]

If $\alpha$ is vertical, then $\alpha$ is a cylinder curve in $\overline{HG}_t(q,\alpha)$ regardless of whether $\alpha$ is a cylinder curve in $q$ or not. We compare the dimensions of $C_\alpha$ to show that $\overline{HG}_t(q,\alpha) \neq q$. Let the height of $C_\alpha$ in $q$ be $K$, where $K=0$ if $\alpha$ is not a cylinder curve in $q$. Height of $C_\alpha$ in $HG_t(q,\alpha)$ is $t+K$. Height of $C_\alpha$ in $\overline{HG}_t(q,\alpha)$ is $(t+K)/A$, where 
\[
A = 1 + t\cdot\iota(\alpha,\mathcal{F}_H(q)) = 1 + t\cdot\ell_q(\alpha).
\] 
Hence the height is different as long as $K$ is not $1/\ell_q(\alpha)$ as $t>0$. Equality occurs when $\alpha$ is a vertical cylinder curve in $q$ with $C_\alpha$ being the whole surface, that is, $\alpha$ is equivalent to the vertical foliation in $\mathcal{PMF}(S)$.

Property (C) shows that horizontal grafting along distinct simple closed curves result in distinct quadratic differentials. Let $\alpha_1 \neq \alpha_2 \in\mathcal{S}(S)$ and assume that $\overline{HG}_{t}(q,\alpha_1) = \overline{HG}_{t}(q,\alpha_2)$ for some $t>0$.

Suppose that $\alpha_1$ is a cylinder curve with respect to $q$. Then a quick comparison of the dimensions of $C_{\alpha_1}$ with respect to $\overline{HG}_{t}(q,\alpha_1)$ and $\overline{HG}_{t}(q,\alpha_2)$ will prove the desired contradiction $\alpha_1=\alpha_2$. This allows us to assume that $\alpha_1$ and $\alpha_2$ both have a unique geodesic representative. 

Let $L$ be a horizontal line segment in $q$ away from singularities where the endpoints of $L$ do not lie on geodesic representatives of $\alpha_1$ and $\alpha_2$. If $\overline{HG}_{t}(q,\alpha_1)=\overline{HG}_{t}(q,\alpha_2)$, then the measures induced by the vertical foliations need to be equal for all choices of $L$. In other words, we define the notion of the length of $L$ and use it to show that $\alpha_1$ and $\alpha_2$ have geodesic representatives consisting of the same set of saddle connections, counting multiplicity.

Along a fixed $\alpha$, the measure is scaled by the normalization if $L$ is disjoint from the grafting cylinder. We use $\iota(L,\alpha)$ to denote the intersection number between $L$ and a geodesic representative of $\alpha$. The intersection number is well-defined and finite by our choice of $L$. We use $\ell_q(L)$ to denote the length of $L$, which is only well-defined if $q$ is fixed. The length of $L$ with respect to $\overline{HG}_t(q,\alpha)$ is defined by adding segments of $L$ in the grafting cylinder. Therefore 
\[
\ell_{\overline{HG}_t(q,\alpha)}(L) = \iota(L,\mathcal{F}_V(\overline{HG}_t(q,\alpha)) = \frac{\ell_q(L) + t\cdot\iota(L,\alpha)}{1+t\cdot\iota(\alpha,\mathcal{F}_H(q))}.
\]
Returning to the comparison of $\overline{HG}_{t}(q,\alpha_1)$ and $\overline{HG}_{t}(q,\alpha_2)$, we conclude that the set of saddle connections in the geodesic representatives of $\alpha_1$ and $\alpha_2$ can differ only in horizontal ones.

Suppose that $s$ is a horizontal saddle connection that has a different multiplicity with respect to $\alpha_1$ and $\alpha_2$. We can use a slanted line segment $L$ that intersects $s$ in the above argument. The induced measure on $L$ is obtained by projection. Again by considering $\iota(L,\mathcal{F}_V(\overline{HG}_t(q,\alpha))$ we conclude that $\alpha_1$ and $\alpha_2$ consist of the same set of saddle connections, counting multiplicity.

We shift our focus to the leaves of the vertical measure foliation of $q$ and how it changes under horizontal grafting. For each fixed $t$, $\overline{HG}_t(q,\alpha)$ defines a map $f$ from $\alpha$ to $\alpha$ using the grafting cylinder as in Figure~\ref{GraftCyl}. The map $f$ is the first return map in the vertical upward direction and let $f(x)=x$ if $x$ lies in the interior of a vertical saddle connection. 

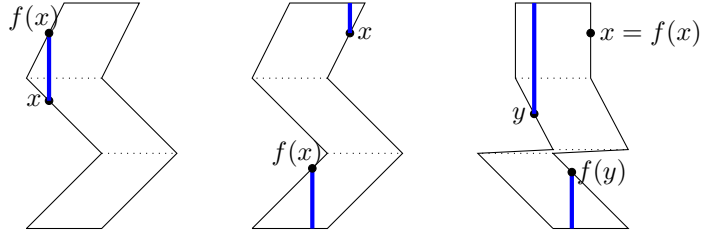
\begin{figure}[ht]
\begin{tikzpicture}

\draw (-3,0) -- (-2,0) -- (-1,1) -- (-2,2) -- (-1.5,3) -- (-2.5,3) -- (-3,2) -- (-2,1) -- (-3,0);
\draw[dotted] (-2,1) -- (-1,1);
\draw[dotted] (-2,2) -- (-3,2);
\draw (-2.9,1.7) node{$x$}; \draw[fill] (-2.7,1.7) circle (0.05);
\draw[blue,ultra thick] (-2.7,1.7) -- (-2.7,2.6);
\draw (-2.9,2.8) node{$f(x)$}; \draw[fill] (-2.7,2.6) circle (0.05);

\draw (0,0) -- (1,0) -- (2,1) -- (1,2) -- (1.5,3) -- (0.5,3) -- (0,2) -- (1,1) -- (0,0);
\draw[dotted] (1,1) -- (2,1);
\draw[dotted] (1,2) -- (0,2);
\draw (1.5,2.6) node{$x$}; \draw[fill] (1.3,2.6) circle (0.05);
\draw[blue,ultra thick] (1.3,2.6) -- (1.3,3);
\draw[blue,ultra thick] (0.8,0) -- (0.8,0.8);
\draw (0.6,1) node{$f(x)$}; \draw[fill] (0.8,0.8) circle (0.05);

\draw (4,0) -- (5,0) -- (4,1) -- (5,1.05) -- (4.5,2) -- (4.5,3) -- (3.5,3) -- (3.5,2) -- (4,1.05) -- (3,1) -- (4,0);
\draw[dotted] (4,1.05) -- (5,1.05);
\draw[dotted] (4,1) -- (3,1);
\draw[dotted] (3.5,2) -- (4.5,2);
\draw[fill] (4.5,2.6) node[right]{$x=f(x)$} circle (0.05);
\draw (3.55,1.5) node{$y$}; \draw[fill] (3.75,1.525) circle (0.05);
\draw[blue,ultra thick] (3.75,1.525) -- (3.75,3);
\draw[blue,ultra thick] (4.25,0) -- (4.25,0.75);
\draw (4.65,0.75) node{$f(y)$}; \draw[fill] (4.25,0.75) circle (0.05);

\end{tikzpicture}
\caption{Grafting cylinder defines an automorphism on $\alpha$.}
\label{GraftCyl}
\end{figure}

The map $f$ is a piecewise linear automorphism on the disjoint union of saddle connections. Outside of vertical saddle connections, the map $f$ detects the combinatorics of the order of saddle connections in the geodesic representative. In other words, $\alpha_1$ and $\alpha_2$ consist of the same set of saddle connections, counting multiplicity, and the ordering of the saddle connections only differ in the placement of the vertical saddle connections.

We generalize the first return map $f$ based on the grafting cylinder to $f^\theta$, which uses first return in the $\theta$ direction. This will solve the problem with vertical saddle connections. If $\alpha_1\neq \alpha_2$, then by choosing a suitable angle $\theta$ we see that $f^\theta$ is different corresponding to $\overline{HG}_{t}(q,\alpha_1)$ and $\overline{HG}_{t}(q,\alpha_2)$. Hence we obtain the contradiction that $\overline{HG}_{t}(q,\alpha_1) = \overline{HG}_{t}(q,\alpha_2)$ implies $\alpha_1=\alpha_2$ and therefore $\overline{HG}_t(q, \ \cdot\ )$ is injective.
\end{proof}

We made a remark on the continuity of the flat grafting map in Section~\ref{EXAMP}. Here we prove a more detailed statement regarding continuity.

\begin{theorem}\label{continuityH}
Let $\overline{HG}_t : QD_1(S) \times \mathcal{S}(S)\to QD_1(S)$ be the normalized horizontal flat grafting map. 
\begin{itemize}
\item The map $\overline{HG}_\cdot(q,\alpha) : \mathbb{R}_{\geq 0} \to QD_1(S)$ is continuous for fixed $q\in QD_1(S)$ and $\alpha\in\mathcal{S}(S)$.
\item The map $\overline{HG}_t(\ \cdot\ ,\alpha)$ for fixed $t > 0$ and  $\alpha\in\mathcal{S}(S)$ is continuous at $q \in QD_1(S)$ if and only if no geodesic representatives of $\alpha$ with respect to $q$ contain a horizontal saddle connection.
\end{itemize}
\end{theorem}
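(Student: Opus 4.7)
The plan is to prove the two bullets separately using the intersection number characterization of $QD_1(S)$ afforded by Proposition~\ref{PropInt}.

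For the first bullet, fix $q$ and $\alpha$. Since $\mathcal{F}_H$ is preserved by Theorem~\ref{mainpropH}(A), only continuity of $\mathcal{F}_V(\overline{HG}_t(q,\alpha))$ in $t$ needs to be checked. The key step, mirroring the decomposition used in the proof of property (C), is to derive for every simple closed curve $\gamma$ the formula
\[
\iota\bigl(\gamma,\mathcal{F}_V(\overline{HG}_t(q,\alpha))\bigr)=\frac{\iota(\gamma,\mathcal{F}_V(q))+t\cdot\iota(\gamma,\alpha)}{1+t\cdot\iota(\alpha,\mathcal{F}_H(q))},
\]
where the numerator comes from splitting a geodesic representative of $\gamma$ in $HG_t(q,\alpha)$ into arcs outside and inside the grafting cylinder (each crossing of $\alpha$ contributing an extra horizontal extent $t$), and the denominator from the area normalization. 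This expression is continuous in $t\in\mathbb{R}_{\geq 0}$ and agrees with $\iota(\gamma,\mathcal{F}_V(q))$ at $t=0$, so continuity of $\overline{HG}_\cdot(q,\alpha)$ follows from Proposition~\ref{PropInt}.

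For the \emph{if} direction of the second bullet, assume no geodesic representative of $\alpha$ at $q$ contains a horizontal saddle connection. Choose a polygons with gluing representative of $q$ refined so that a geodesic representative of $\alpha$ is visible among its edges; every saddle connection in this representative makes an angle with the horizontal bounded away from zero. For $q'$ sufficiently close to $q$, the same combinatorial polygon decomposition yields saddle connection vectors varying continuously with $q'$, and none becomes horizontal. The combinatorial type of a geodesic representative of $\alpha$ is thus unchanged in $q'$, the parallelograms glued to form $HG_t(q',\alpha)$ vary continuously, and so does the area-normalization factor $1+t\cdot\iota(\alpha,\mathcal{F}_H(q'))$. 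Continuity of $\overline{HG}_t(\,\cdot\,,\alpha)$ at $q$ follows.

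For the \emph{only if} direction, suppose a geodesic representative of $\alpha$ at $q$ contains a horizontal saddle connection $s$. By definition $\overline{HG}_t(q,\alpha)$ is the limit obtained by rotating $q$ counterclockwise, which selects one specific shear direction for the degenerate parallelogram over $s$. Construct $q_n\to q$ by rotating $q$ clockwise by angles $\theta_n\to 0^+$; in each $q_n$ the saddle connection $s$ acquires negative slope and the corresponding parallelogram used in the grafting is non-degenerate with the opposite orientation. In the limit $\overline{HG}_t(q_n,\alpha)$ converges to a quadratic differential differing from $\overline{HG}_t(q,\alpha)$ by a nontrivial horizontal shift through the grafting cylinder along $s$. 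The main obstacle is certifying this discrepancy through intersection numbers: my plan is to pick a cylinder curve $\gamma$ crossing $s$ transversely with non-vertical slope (whose existence follows from the denseness of cylinder curves, cf.~\cite{Masur2}), and show via the explicit formula above that $\iota(\gamma,\mathcal{F}_V(\overline{HG}_t(q_n,\alpha)))$ and $\iota(\gamma,\mathcal{F}_V(\overline{HG}_t(q,\alpha)))$ have different limits as $\theta_n\to 0$, establishing the failure of continuity at $q$.
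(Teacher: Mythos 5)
The central device of your argument---the closed formula
\[
\iota\bigl(\gamma,\mathcal{F}_V(\overline{HG}_t(q,\alpha))\bigr)
=\frac{\iota(\gamma,\mathcal{F}_V(q))+t\cdot\iota(\gamma,\alpha)}{1+t\cdot\iota(\alpha,\mathcal{F}_H(q))}
\]
---is false for closed curves, and this is a genuine gap because you use it both for the first bullet and to certify discontinuity in the only-if direction. The formula assumes every crossing of the grafting cylinder adds horizontal extent $t$ with no interaction with the rest of $\gamma$, but $\iota(\gamma,\mathcal{F}_V(\cdot))$ is the infimum of $\int|dx|$ over the homotopy class, and after grafting the geodesic representative of $\gamma$ can straighten so that horizontal travel cancels. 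A concrete counterexample on the square torus: take $\alpha=(1,1)$ and compare $\gamma_1=(1,0)$ with $\gamma_2=(1,2)$. Both satisfy $\iota(\gamma_i,\mathcal{F}_V(q))=1$ and $\iota(\gamma_i,\alpha)=1$, so your formula assigns them the same value; yet the unnormalized grafting is a shear-like deformation along $\alpha$ under which exactly one of the two untwists, and their horizontal extents in $HG_t(q,\alpha)$ are $1+t$ and $|1-t|$ in some order. This cancellation phenomenon is precisely what the paper's proof is organized around: continuity in $t$ is proved not by a closed formula but by bounding the rate of change of the $x$-components of the finitely many saddle connections of a representative of $\gamma$, and the if-direction separately tracks cancellations coming from consecutive saddle connections with opposite $x$-signs and from oppositely oriented crossings of the grafting cylinder. (The formula you want is valid for short transverse arcs $L$ away from singularities, which is how the paper uses it in the injectivity proof, but not for closed curves.)

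Two further points. In the if-direction you assert that for $q'$ near $q$ the same combinatorial polygon decomposition persists and the geodesic representative of $\alpha$ keeps its combinatorial type; in the Gardiner--Masur topology every neighborhood of $q$ meets adjacent strata where cone points have split, so representatives of $\alpha$ and of the test curves can acquire extra saddle connections. The paper handles this by requiring only that saddle connections of nearby representatives be Hausdorff-close to those at $q_0$ or close to the zero vector, and your argument needs some such statement. In the only-if direction your geometric picture (approaching $q$ by clockwise versus counterclockwise rotations, producing oppositely sheared degenerate parallelograms over the horizontal saddle connection) agrees with the paper's construction, but your certification that the two limits differ leans on the broken formula; the paper instead distinguishes $q^+$ from $q^-$ by their vertical first-return maps on $\alpha$, and you would need either that device or an honest direct computation of an intersection number for your chosen transverse cylinder curve.
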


\begin{proof}
Recall the topology of $QD_1(S)$ using the topology of $\mathcal{MF}(S)$. The base of the topology can be given by $B_{q_0}(K,\varepsilon)$, the collection of $q\in QD_1(S)$ where for any simple closed curve $\gamma \in \mathcal{S}(S)$ with length $\ell_q(\gamma)<K$ and $\ell_{q_0}(\gamma)<K$, we have $| \iota(\gamma,\mathcal{F}_H(q)) - \iota(\gamma,\mathcal{F}_H(q_0))| < \varepsilon$ and $| \iota(\gamma,\mathcal{F}_V(q)) - \iota(\gamma,\mathcal{F}_V(q_0))| < \varepsilon$.

Fix $q\in QD_1(S)$, $\alpha\in\mathcal{S}(S)$ and let $q_0 = \overline{HG}_{t_0}(q,\alpha)$ for some $t_0>0$. We want to show that for any $B_{q_0}(K,\varepsilon)$ there exists $\delta>0$ such that $\overline{HG}_t(q,\alpha) \in B_{q_0}(K,\varepsilon)$ for all $t \in (t_0-\delta, t_0+\delta)$. The case when $t_0=0$ is a one-sided case of the argument below. By Theorem~\ref{mainpropH}, the horizontal foliation is preserved and it suffices to estimate $\iota(\gamma,\mathcal{F}_V(\overline{HG}_t(q,\alpha)))$ for some $\gamma\in\mathcal{S}(S)$ with $\ell_{q_0}(\gamma)<K$. If $\gamma$ is a cylinder curve in $q_0$, then we can choose $\delta$ small enough such that $\gamma$ is a cylinder curve with respect to $\overline{HG}_{t}(q,\alpha)$ for any $t\in (t_0-\delta, t_0+\delta)$. The intersection number with the vertical foliation is a continuous function of $t$ that depends on $\iota(\gamma,\alpha)$ and the scaling factor $1+t\cdot \iota(\alpha,\mathcal{F}_H(q))$. 

In the case when $\gamma$ is not a cylinder curve in $q_0$, we claim that the function $\iota(\gamma,\mathcal{F}_V(\overline{HG}_t(q,\alpha)))$ is also continuous in the sense that $\delta$ can be chosen depending on $\iota(\gamma,\alpha)$ and the scaling factor $1+t\cdot \iota(\alpha,\mathcal{F}_H(q))$. The number of saddle connections in a geodesic representative of $\gamma$ with respect to $\overline{HG}_{t}(q,\alpha)$ for $t\in (t_0-\delta, t_0+\delta)$ is finite. The $x$-component of each saddle connection changes with respect to $t$ with the rate at most $\max(1,\iota(\gamma,\alpha))$ before scaling. The saddle connections by themselves do not necessarily exist for all $t\in (t_0-\delta, t_0+\delta)$ but the total rate of change over all saddle connections is bounded by the maximum number of saddle connections and $\iota(\gamma,\alpha)$. Therefore horizontal flat grafting along a fixed simple closed curve is continuous as a function of $t$.

Suppose that a geodesic representative of $\alpha$ with respect to $q$ contains a horizontal saddle connection. We prove discontinuity by constructing a sequence $\{ q_i \}$ in $QD_1(S)$ such that $q_i \to q$ but $\overline{HG}_t(q_i,\alpha) \not \to \overline{HG}_t(q,\alpha)$. 

Consider the rotation action on the space of quadratic differentials induced by the action of $SL_2(\mathbb{R})$ on $\mathbb{R}^2$. We denote the counterclockwise rotation by an angle $\theta$ by $e^{i \theta} q$. It is clear that $e^{i\theta}q \to q$ as $\theta\to 0$. It suffices to show that $q^+$, the limit of $\overline{HG}_t(e^{i \theta} q,\alpha)$ as $\theta\to 0^+$, is not equal to $q^-$, the limit of $\overline{HG}_t(e^{i \theta} q,\alpha)$ as $\theta\to 0^-$.

Fix a geodesic representative of $\alpha$ with respect to $q$. We compare the grafting cylinders of $\alpha$ in $\overline{HG}_t(e^{i \theta} q,\alpha)$ for $\theta = \pm\delta$, $\delta>0$ a small number. As long as the angle $\delta$ is chosen such that the geodesic representative of $\alpha$ contains no horizontal saddle connections, the grafting cylinder is a sequence of parallelograms of width $t$ before scaling and an identification of the top and bottom edges of the parallelograms. We see that the limits $q^+$ and $q^-$ are well-defined and they differ in the degenerate parallelograms for each horizontal saddle connection. The different degenerated parallelograms show that the vertical first return map $f$ of $\alpha$ with respect to $q^+$ and $q^-$ differ. Hence $q^+ \neq q^-$ and $\overline{HG}_t(\ \cdot\ ,\alpha)$ is not continuous at $q$.

If no geodesic representatives of $\alpha$ contain a horizontal saddle connection with respect to $q_0$, then we consider conditions on $q$ that guarantee that $\overline{HG}_t(q,\alpha)$ will lie inside $B_{\overline{HG}_t(q_0,\alpha)}(K,\varepsilon)$. Let $\Sigma \subset \mathcal{S}(S)$ be the finite set of simple closed curves that has length less than $K$ with respect to $\overline{HG}_t(q_0,\alpha)$. If we let $K'$ be greater than the maximum length of a curve in $\Sigma$ with respect to $q_0$, then it suffices to study the finite set $\Sigma$ when working inside the open set $B_{q_0}(K',\varepsilon')$.

For any $\gamma \in \Sigma$, we know that $\ell_{\overline{HG}_t(q_0,\alpha)}(\gamma) < K$ and $\ell_{q_0}(\gamma) < K'$. By Theorem~\ref{mainpropH} again, 
\[
| \iota(\gamma,\mathcal{F}_H(q)) - \iota(\gamma,\mathcal{F}_H(q_0))| < \varepsilon \Rightarrow | \iota(\gamma,\mathcal{F}_H(\overline{HG}_t(q,\alpha))) - \iota(\gamma,\mathcal{F}_H(\overline{HG}_t(q_0,\alpha)))| < \varepsilon.
\]
The intersection number with the vertical foliation is scaled down by a normalizing factor depending on $\iota(\gamma,\mathcal{F}_H(q_0))$. The scaling is continuous since we can choose $K'$ and $\varepsilon'$ such that $\iota(\gamma,\mathcal{F}_H(q))$ is arbitrarily close to $\iota(\gamma,\mathcal{F}_H(q_0))$ for $q \in B_{q_0}(K',\varepsilon')$. 

The changes of the intersection number with the vertical foliation $\iota(\gamma,\mathcal{F}_V(\ \cdot \ ))$ under horizontal flat grafting can happen in two other ways. If a geodesic representative of $\gamma$ with respect to $q_0$ has consecutive saddle connections with opposite signs in their $x$-components, then grafting can cause cancellation depending on their relative positions to the grafting cylinder. Finally, if $\gamma$ has to cross the grafting cylinder horizontally, then the intersection number changes as if we added a horizontal saddle connection of length $t$. It suffices to prove that each of these changes vary continuously with respect to $q \in B_{q_0}(K',\varepsilon')$.

Fix geodesic representatives of $\alpha$ and $\gamma$ with respect to $q_0$. Let $\delta$ be $\varepsilon / N$ where $N$ is the maximum number of saddle connections in a geodesic representative of $\gamma$ with respect to any $q \in B_{q_0}(K',\varepsilon)$. Refine $K'$ and $\varepsilon'$ such that geodesic representatives of $\alpha$ and $\gamma$ with respect to any $q \in B_{q_0}(K',\varepsilon')$ are $\delta$-close in the Hausdorff metric, that is, saddle connections are either $\delta$-close to a saddle connection with respect to $q_0$ or $\delta$-close to the zero vector.

The new cancellations that occur for $q \mapsto \overline{HG}_t(q,\alpha)$ arises from the saddle connections that are $\delta$-close to zero. By refining $K'$ and $\varepsilon'$ further, we can maintain the cancellations that occur for $q_0 \mapsto \overline{HG}_t(q_0,\alpha)$. Hence the total changes due to cancellation is continuous with respect to $q \in B_{q_0}(K',\varepsilon')$.

The intersection number $\iota(\gamma,\alpha)$ is constant, hence we consider the orientation of the added horizontal saddle connections. If the orientation matches, then we simply see an increase in intersection number. If the orientation is opposite, cancellation will happen and that is the same as the case of consecutive saddle connections with opposite signs in their $x$-components. The gap between increase or cancellation is the reason why $\alpha$ cannot contain a horizontal saddle connection at $q_0$. The orientation is determined by the slopes of saddle connections in their geodesic representatives and examples will be given in the Appendix. Therefore once again we refine $K'$ and $\varepsilon'$ to show that $q\in B_{q_0}(K',\varepsilon')$ implies that $\overline{HG}_t(q,\alpha) \in \overline{HG}_t(q_0,\varepsilon)$.
\end{proof}

The discontinuity generated by horizontal saddle connections can be used to prove that $\overline{HG}_t(q,\ \cdot\ )$ as a function over simple closed curves cannot extend continuously to $\mathcal{MF}(S)$. Horizontal flat grafting can be defined for weighted simple multi-curves but the limit to a measured foliation might not exist. 


\section{Strata of quadratic differentials and flat grafting}\label{STRAT}

As shown in the examples in Section~\ref{EXAMP}, we see that flat grafting might map $q$ to a different stratum. The combinatorial data of a geodesic representative of $\alpha$ in $q$ determines the stratum $\overline{HG}_t(q,\alpha)$ would lie in. The first step is the necessary and sufficient conditions of geodesic representatives such that $q$ and $\overline{HG}_t(q,\alpha)$ lie in the same stratum.

\begin{prop}\label{StratumStay}
Let $q \in QD(S,\sigma)$. There exists $\delta > 0$ where $\overline{HG}_t(q,\alpha) \in QD(S,\sigma)$ for all $0 < t < \delta$ if and only if a geodesic representative of $\alpha$ with respect to $q$ satisfies the following conditions.
\begin{itemize}
\item At each marked point of cone angle at least $2\pi$, consecutive saddle connections have one-sided angle at most $2\pi$ and same sign in their $y$-components. (If $y$-component is zero, consider $e^{i\theta}q$ with $\theta\to 0^+$.)
\item At each non-marked cone point of cone angle at least $4\pi$, consecutive saddle connections have one of the two angles at most $2\pi$ and same sign in their $y$-components. (If $y$-component is zero, consider $e^{i\theta}q$ with $\theta\to 0^+$.)
\end{itemize}
\end{prop}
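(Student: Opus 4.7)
The plan is to localize the question at each cone point of $q$ that lies on the chosen geodesic representative of $\alpha$. Flat grafting modifies $q$ only in a neighborhood of $\alpha$, and for sufficiently small $t>0$ the interior of the grafting cylinder is a flat union of parallelograms with no new singularities, so the stratum structure is preserved if and only if the cone-point structure at each such $p$ is preserved by the grafting.

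Fix a cone point $p$ where consecutive saddle connections $s_j, s_{j+1}$ meet, and let $\theta_1, \theta_2$ denote the two angles at $p$ on the two sides of $\alpha$. After cutting along $\alpha$ and inserting the parallelograms $P_j$ and $P_{j+1}$, the point $p$ is potentially split into two points (one per side of $\alpha$), each receiving the original angle $\theta_i$ together with the corners of $P_j, P_{j+1}$ that abut it. Because each parallelogram contributes two adjacent corners (summing to $\pi$) distributed across both sides of $\alpha$, the total corner contribution equals $2\pi$; writing $c$ for the contribution on side $1$, side $2$ receives $2\pi - c$. The stratum at $p$ is preserved precisely when one replacement point is regular (angle $2\pi$) and the other carries the full cone angle $\theta_1 + \theta_2$, which algebraically forces either $c = 2\pi - \theta_1$ (requiring $\theta_1 \leq 2\pi$) or $c = \theta_2$ (requiring $\theta_2 \leq 2\pi$).

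For the forward direction I would verify that when the one-sided and same-sign conditions both hold, the value of $c$ produced by the construction actually lands at the required $2\pi - \theta_i$ on the appropriate side. The same-sign condition makes the two parallelogram gluings translations rather than semi-translations, so that the corner angles of $P_j, P_{j+1}$ assemble coherently and a direct planar calculation on the unfolded cone at $p$ matches $c$ to that value. The asymmetric thresholds $2\pi$ (marked) and $4\pi$ (non-marked) reflect that the one-sided condition is automatically satisfied below them (using $\theta_i \geq \pi$ on both sides for non-marked, only on one side for marked), so the hypothesis is only imposed above these bounds. For the converse, if both $\theta_i > 2\pi$ then neither algebraic choice of $c$ lies in $[0, 2\pi]$, so both replacement points must be genuinely singular; if the $y$-signs differ, the semi-translation gluing flips one parallelogram and the matching identity fails, producing an extra singularity or a holonomy change. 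Horizontal saddle connections are handled by the rotation limit $\theta \to 0^+$ already used in the construction. The main obstacle is the explicit planar computation relating $c$ to $\arg(s_j), \arg(s_{j+1})$ with careful orientation tracking, confirming that the stated inequalities are both necessary and sufficient.
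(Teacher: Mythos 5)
Your overall strategy -- localize at each cone point on the geodesic, observe that the two parallelograms abutting a cone point contribute a total corner angle of $2\pi$, and characterize preservation of the stratum by whether the excess angle stays concentrated at one new point -- is the same strategy the paper uses (there phrased as: the excess angle of the original cone point is distributed among the endpoints of the horizontal segments created by the grafting, and the stratum is preserved iff it all lands on a single endpoint, respectively on the marked point). So the framework is right. But there are two genuine gaps.

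First, the heart of the sufficiency direction is exactly the step you defer. You correctly derive that preservation \emph{algebraically forces} $c=2\pi-\theta_1$ or $c=\theta_2$, but the construction produces a specific $c$, namely $c=(\pi-\arg s_j)+\arg s_{j+1}$ (in the same-sign case), and one must check that when $\theta_i\le 2\pi$ this value actually coincides with the forced one. This is not automatic from the inequality $\theta_i\le 2\pi$ alone; it follows from the fact that $\theta_L$ is congruent to $\pi+\arg s_j-\arg s_{j+1}$ modulo $2\pi$ (so that the new angle $\theta_L+c$ is forced to be a multiple of $2\pi$, hence equal to $2\pi$ exactly when $\theta_L<2\pi$, using the angle condition $\theta_L\ge\pi$). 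Without carrying out this ``planar computation'' you have only shown that the stated conditions are \emph{necessary} for the algebra to close up, not that they are \emph{sufficient} for the stratum to be preserved.

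Second, your model ``$p$ splits into two points, one per side of $\alpha$, receiving $c$ and $2\pi-c$'' is only valid when the consecutive saddle connections have $y$-components of the same sign and the geodesic passes through $p$ once. When the signs differ, the semi-translation identification of the horizontal edges reassembles the corners differently: the paper's Figure~7 and the third example of Figure~4 (a $6\pi$ point becoming $4\pi,3\pi,3\pi$ with holonomy $-1$) show \emph{three} new points, not two, and when the geodesic passes through $p$ several times there are several horizontal segments and correspondingly more candidate endpoints. Your converse argument (``neither algebraic choice of $c$ lies in $[0,2\pi]$, so both replacement points are singular'') therefore does not apply as stated in the opposite-sign case, which you dispatch with the single assertion that ``the matching identity fails''; that case needs its own analysis of how the corners attach. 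A smaller omission: the proposition asserts a uniform $\delta$, and the paper takes $\delta$ to be the length of the shortest horizontal saddle connection of $q$, precisely to prevent the endpoints of the inserted horizontal segments from colliding with other cone points before the local analysis applies; your ``sufficiently small $t$'' should be pinned down to this.
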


\begin{proof}
Recall the angle condition for geodesic representatives of a simple closed curve. Any geodesic representative is either a cylinder curve or a finite list of saddle connections. Consecutive saddle connections meet at a cone point and the angle between them is the two angles that add up to the cone angle of the point. If the cone point is a marked point, we treat the point as a puncture and the angle between the saddle connections is the angle away from the marked point, hence one-sided. The angle condition requires the angles to be at least angle $\pi$. 

We choose $\delta$ in the statement to be the length of the shortest horizontal saddle connection in $q$. In other words, if $q$ contains no horizontal saddle connection, then $\overline{HG}_t(q,\alpha)$ lie in the same stratum for all $t\geq 0$ if and only if $\alpha$ satisfies the given conditions. This follows from the observation that the strata structure can only change from either cone points splitting or merging.

First we prove that the cone point splits for small $t$ value whenever $\alpha$ does not satisfy the conditions. Note that the minimum cone angle at a cone point is $\pi$ for marked points and $3\pi$ for non-marked points. Consider the horizontal sides of the parallelograms generated by the horizontal flat grafting. For a fixed cone point, we obtain a horizontal segment for each pair of consecutive saddle connections meeting at the cone point. The total excess angle over all endpoints of the horizontal segments will be the excess angle at the cone point, where the excess angle is the cone angle minus $2\pi$. If there is a pair with both angles greater or equal to $2\pi$, then the excess angle will be split among at least two points, that is, the cone point splits. In the marked point case, because the marked point counts as a cone point regardless of cone angle, the cone point splits if a non-marked endpoint of the horizontal segments gets excess angle. The case when the $y$-components are opposite sign is similar, see Figure~\ref{ConeSplit}. The sign of the $y$-component gives a subtle bound on the angles. For example, angles that appear to be less than $\pi$ in the picture has to be at least $2\pi$.

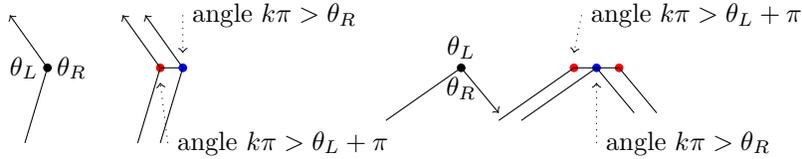
\begin{figure}[ht]
\begin{tikzpicture}

\draw[fill] (0,1) circle (0.05);
\draw[->] (-0.3,0) -- (0,1) -- (-0.5,1.7);
\draw (0,1) node[right]{$\theta_R$};
\draw (0,1) node[left]{$\theta_L$};

\draw[red,fill] (1.5,1) circle (0.05);
\draw (1.5,1) -- (1.8,1);
\draw[blue,fill] (1.8,1) circle (0.05);
\draw[->] (1.2,0) -- (1.5,1) -- (1,1.7);
\draw[->] (1.5,0) -- (1.8,1) -- (1.3,1.7);
\draw[dotted,->] (1.8,1.7) node[right]{angle $ k\pi > \theta_R$} -- (1.8,1.2);
\draw[dotted,->] (1.6,0) node[right]{angle $k\pi > \theta_L + \pi$} -- (1.5,0.8);

\draw[fill] (5.5,1) circle (0.05);
\draw[->] (4.5,0.3) -- (5.5,1) -- (6,0.4);
\draw (5.5,1) node[below]{$\theta_R$};
\draw (5.5,1) node[above]{$\theta_L$};

\draw[red,fill] (7,1) circle (0.05);
\draw[blue,fill] (7.3,1) circle (0.05);
\draw[red,fill] (7.6,1) circle (0.05);
\draw (6,0.3) -- (7,1) -- (7.3,1) -- (6.3,0.3);
\draw (7.8,0.4) -- (7.3,1) -- (7.6,1) -- (8.1,0.4);
\draw[dotted,->] (7.3,0) node[right]{angle $k\pi > \theta_R$} -- (7.3,0.8);
\draw[dotted,->] (7.1,1.7) node[right]{angle $k\pi > \theta_L + \pi$} -- (7,1.2);

\end{tikzpicture}
\caption{The local picture of cone points splitting under flat grafting.}
\label{ConeSplit}
\end{figure}

If $\alpha$ satisfies the conditions, then the total excess angle is concentrated at a single cone point. Therefore $q$ and $\overline{HG}_t(q,\alpha)$ are in the same stratum for all $t < \delta$ due to having no other cone point splitting or merging possible.
\end{proof}

In a different viewpoint, we observe that the stratum structure stabilizes as $t$ goes to infinity.

\begin{prop}\label{LongTerm}
For all $q\in QD_1(S)$ and $\alpha\in \mathcal{S}(S)$, there exists $K$ such that $\overline{HG}_t(q,\alpha)$ lies in the same stratum for all $t > K$.
\end{prop}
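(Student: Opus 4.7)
The plan is to show that the stratum of $\overline{HG}_t(q,\alpha)$ is determined by combinatorial data attached to the geodesic representative of $\alpha$ with respect to $q$, and hence is actually constant for all $t>0$; this will give the existence of $K$ with room to spare (any $K\geq 0$ works).

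First I would locate all candidate cone points of $\overline{HG}_t(q,\alpha)$. The grafting cylinder is a union of flat parallelograms of horizontal width $t$ whose corners coincide exactly with the endpoints of the saddle connections $s_1,\dots,s_m$ of $\alpha$ in $q$. Consequently no new cone points are introduced in the interior of the grafting cylinder, and every cone point of the grafted surface comes from (a possible splitting of) a cone point of $q$ lying on $\alpha$.

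Second, at each such cone point $p$ I would track the redistribution of cone angle, exactly as in Proposition~\ref{StratumStay} and Figure~\ref{ConeSplit}: between each pair of consecutive saddle connections of $\alpha$ meeting at $p$ a horizontal segment is inserted, and the two new vertex angles are determined by the slopes of the adjacent saddle connections, the signs of their $y$-components, and their angular arrangement around $p$. All of these pieces of data are invariants of the pair $(q,\alpha)$ and depend on neither $t$ nor the scaling factor introduced by normalization. Therefore the multiset of cone angles of $\overline{HG}_t(q,\alpha)$, together with its holonomy data, is independent of $t>0$, and I would conclude that the stratum is constant for all $t>0$.

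The main obstacle will be the degenerate case in which the geodesic representative of $\alpha$ in $q$ contains one or more horizontal saddle connections. There, $\overline{HG}_t(q,\alpha)$ is defined as the limit of $\overline{HG}_t(e^{i\theta}q,\alpha)$ as $\theta\to 0^+$, and the corresponding parallelograms are degenerate. I would need to verify that the cone-angle assignment for each such degenerate parallelogram is well-defined by the limit and is again independent of $t$, so that the limiting stratum matches the stratum computed by the same combinatorial recipe for non-degenerate $t$-grafting. Once this degenerate case is disposed of, the combinatorial invariance described above applies uniformly, and Proposition~\ref{LongTerm} follows.
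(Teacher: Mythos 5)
Your argument has a genuine gap, and it sits exactly where you flag "the main obstacle." The claim that the stratum of $\overline{HG}_t(q,\alpha)$ is constant for all $t>0$ is correct only when the geodesic representative of $\alpha$ contains no horizontal saddle connections: in that case the distribution of excess angle among the endpoints of the inserted horizontal segments is determined by the slopes, $y$-component signs, and cyclic arrangement of the saddle connections at each cone point, none of which depend on $t$, and your reasoning goes through. But when $\alpha$ does contain a horizontal saddle connection, the stratum is \emph{not} independent of $t$. The inserted horizontal segments have length proportional to $t$, so their endpoints (the newly created cone points) move horizontally as $t$ grows; when such an endpoint reaches a cone point at the far end of a horizontal saddle connection of length $k$ (with multiplicity $m$ in $\alpha$), cone points merge and then re-split. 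This is the content of Figure~\ref{Overlap} in the paper: the strata structure changes at the finitely many times $t = k/m, 2k/m, \ldots, k$, and only stabilizes for $t$ larger than the length of the longest horizontal saddle connection in $\alpha$. So the combinatorial recipe you describe is not $t$-independent in the degenerate case, and your plan to "dispose of" that case by checking well-definedness of the limiting cone-angle assignment cannot succeed as stated.

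The fix is not to prove constancy for all $t>0$ but to prove eventual constancy, which is what the proposition actually asserts. The paper's proof does exactly this: it isolates the finitely many horizontal saddle connections of the geodesic representative, shows that all merging/splitting events occur at $t$-values bounded by their lengths (divided by multiplicities), and takes $K$ to be the length of the longest horizontal saddle connection in $\alpha$. Your analysis of the non-horizontal case is sound and matches the paper's; what is missing is the quantitative tracking of when the growing horizontal segments collide with other cone points, which is precisely what produces a nonzero $K$.
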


\begin{proof}
In fact, we can classify the exact sequence of strata structure of $\overline{HG}_t(q,\alpha)$. If $\alpha$ is a cylinder curve, then $\overline{HG}_t(q,\alpha)$ lies in the same stratum for all $t \geq 0$. Otherwise, consider the finite set of horizontal saddle connections in the unique geodesic representative of $\alpha$.

If the geodesic representative of $\alpha$ contains no horizontal saddle connections, then $\overline{HG}_t(q,\alpha)$ lies in the same stratum for all $t > 0$. The stratum is the result of distributing total excess angle at each cone point $\alpha$ passes through. If there is at least one horizontal saddle connection in $\alpha$, then the horizontal segments discussed in the proof of Proposition~\ref{StratumStay} will overlap at some point, see Figure~\ref{Overlap}.

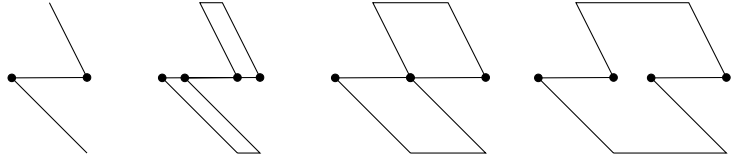
\begin{figure}[ht]
\begin{tikzpicture}

\draw[fill] (1,1) circle (0.05);
\draw[fill] (2,1.005) circle (0.05);
\draw (2,0) -- (1,1) -- (2,1.005) -- (1.5,2);

\draw[fill] (3.3,1) circle (0.05);
\draw[fill] (4.3,1.005) circle (0.05);
\draw[fill] (3,1) circle (0.05);
\draw[fill] (4,1.005) circle (0.05);
\draw (4,0) -- (4.3,0) -- (3.3,1) -- (4.3,1.005) -- (3.8,2) -- (3.5,2) -- (4,1.005) -- (3,1) -- (4,0);

\draw[fill] (6.3,1) circle (0.05);
\draw[fill] (7.3,1.005) circle (0.05);
\draw[fill] (5.3,1) circle (0.05);
\draw[fill] (6.3,1.005) circle (0.05);
\draw (6.3,0) -- (7.3,0) -- (6.3,1) -- (7.3,1.005) -- (6.8,2) -- (5.8,2) -- (6.3,1.005) -- (5.3,1) -- (6.3,0);

\draw[fill] (9.5,1) circle (0.05);
\draw[fill] (10.5,1.005) circle (0.05);
\draw[fill] (8,1) circle (0.05);
\draw[fill] (9,1.005) circle (0.05);
\draw (9,0) -- (10.5,0) -- (9.5,1) -- (10.5,1.005) -- (10,2) -- (8.5,2) -- (9,1.005) -- (8,1) -- (9,0);

\end{tikzpicture}
\caption{Cone points merging then splitting.}
\label{Overlap}
\end{figure}

The change in strata structure shown in Figure~\ref{Overlap} is the case when the horizontal saddle connection has multiplicity one in $\alpha$. If a horizontal saddle connection has length $k$ with multiplicity $m$, then the merging and splitting happens at $t = k/m, 2k/m, \ldots, k$. Combining this fact with the distribution of excess angles, we can use the combinatorial data of the geodesic representative to determine the strata structure of $\overline{HG}_t(q,\alpha)$. Also as a conclusion, we can take $K$ to be the length of the longest horizontal saddle connection in $\alpha$.
\end{proof}

The changes in strata structure under a small flat grafting correspond to decreasing multiplicity of a root of a polynomial. The case of special interest is when the resulting polynomial has only simple roots. In our case, the principal stratum is the stratum with $\sigma_p = (3\pi,\ldots, 3\pi, \pi,\ldots, \pi;-1)$. We prove Theorem~\ref{principal} in an indirect way.

Recall that a slice of the space of unit-area quadratic differentials is the set
\[
QD_1^\mu(S) = \{ q \in QD_1(S) \mid \mu = \mathcal{F}_H(q)\}.
\]
Since any horizontal flat grafting preserves the horizontal foliation, $\overline{HG}_t(\ \cdot \ ,\alpha)$ maps $QD_1^\mu(S)$ to itself. We consider the image of the map $\overline{HG}_\cdot (q, \ \cdot \ )$ over $\mathbb{R}_{\geq 0} \times \mathcal{S}(S)$. 

\begin{theorem}\label{Dense}
The image of $\overline{HG}_\cdot (q, \ \cdot \ )$ is dense in $QD_1^\mu(S)$ for all $q \in QD_1^\mu(S)$.
\end{theorem}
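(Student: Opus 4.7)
The plan is to prove density by showing that every target $q_* \in QD_1^\mu(S)$ is the limit of a sequence $\overline{HG}_{t_n}(q,\alpha_n)$ with $t_n \to \infty$ and $\alpha_n \in \mathcal{S}(S)$. Under the Gardiner-Masur correspondence, $QD_1^\mu(S)$ is identified with the set $\{\nu \in \mathcal{MF}(S) : \iota(\mu,\nu)=1,\ (\mu,\nu) \text{ filling}\}$. Since $\mathcal{F}_H(\overline{HG}_t(q,\alpha)) = \mu$ by Theorem~\ref{mainpropH}(A), it suffices to approximate the vertical foliation $\nu_* := \mathcal{F}_V(q_*)$ by $\mathcal{F}_V(\overline{HG}_t(q,\alpha))$ for suitable $(t,\alpha)$.

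The core asymptotic computation is as follows. For any $\gamma \in \mathcal{S}(S)$, a geodesic representative of $\gamma$ with respect to $HG_t(q,\alpha)$ decomposes into arcs in the original surface and arcs crossing the grafting cylinder, each such crossing contributing a horizontal segment of length $t$. Extending the arc-decomposition argument of Theorem~\ref{mainpropH}(A) and the line-segment formula from the injectivity proof of Theorem~\ref{mainpropH}(C), the vertical measure on the complementary arcs is inherited from $\mathcal{F}_V(q)$ up to a bounded cancellation term, so
\[
\iota(\gamma, \mathcal{F}_V(\overline{HG}_t(q,\alpha))) = \frac{\iota(\gamma, \mathcal{F}_V(q)) + t\cdot\iota(\gamma,\alpha) + O(1)}{1 + t\cdot\iota(\alpha,\mu)}.
\]
Assuming $\iota(\alpha,\mu) > 0$, letting $t \to \infty$ gives $\iota(\gamma, \mathcal{F}_V(\overline{HG}_t(q,\alpha))) \to \iota(\gamma,\alpha)/\iota(\alpha,\mu)$ for every $\gamma \in \mathcal{S}(S)$. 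By Proposition~\ref{PropInt} and the topology on $\mathcal{MF}(S)$, this yields $\mathcal{F}_V(\overline{HG}_t(q,\alpha)) \to \alpha/\iota(\alpha,\mu)$ in $\mathcal{MF}(S)$.

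Now, by Thurston's density of weighted simple closed curves in $\mathcal{MF}(S)$, one can choose $\alpha_n \in \mathcal{S}(S)$ and $c_n > 0$ with $c_n\alpha_n \to \nu_*$. Continuity of $\iota$ gives $c_n\iota(\mu,\alpha_n) \to \iota(\mu,\nu_*) = 1$, hence $\alpha_n/\iota(\mu,\alpha_n) = (c_n\alpha_n)/(c_n\iota(\mu,\alpha_n)) \to \nu_*$ in $\mathcal{MF}(S)$. The filling condition is open on $\mathcal{MF}(S) \times \mathcal{MF}(S)$ (by a standard compactness argument on $\mathcal{PMF}$ applied to $\gamma \mapsto \iota(\gamma,\mu)+\iota(\gamma,\nu)$), so $(\mu,\alpha_n)$ is filling for large $n$, and $q_n := \mathcal{F}^{-1}(\mu,\alpha_n/\iota(\mu,\alpha_n)) \in QD_1^\mu(S)$ satisfies $q_n \to q_*$ by continuity of $\mathcal{F}^{-1}$. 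Using the limit from the previous paragraph, one picks $t_n$ large enough that $\overline{HG}_{t_n}(q,\alpha_n)$ lies within distance $1/n$ of $q_n$ in any metric compatible with the topology, giving $\overline{HG}_{t_n}(q,\alpha_n) \to q_*$ and proving density.

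The main technical obstacle is justifying the $O(1)$ error term in the vertical-intersection formula: as $t$ varies, geodesic representatives of $\gamma$ with respect to $HG_t(q,\alpha)$ may reroute through the grafting cylinder, and sign cancellations among consecutive saddle connections (with opposite $x$-component signs) can alter the naive sum. However, the number of saddle connections in a geodesic representative of $\gamma$ grows at most as a function of fixed topological data (intersection with $\alpha$ and combinatorial type), independent of $t$, so any such error is bounded uniformly; since the numerator has leading term $t\cdot\iota(\gamma,\alpha)$ and the denominator grows like $t\cdot\iota(\alpha,\mu)$, a bounded error is negligible in the limit. Following the style of the arc-by-arc analysis in Theorem~\ref{continuityH} handles this accounting.
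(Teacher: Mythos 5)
Your proposal is correct and follows the same overall strategy as the paper: since horizontal grafting preserves $\mu$, density reduces to showing that the vertical foliations $\mathcal{F}_V(\overline{HG}_t(q,\alpha))$ accumulate on every filling $\nu$ with $\iota(\mu,\nu)=1$, and both you and the paper do this by combining density of weighted simple closed curves in $\mathcal{MF}(S)$ with the key limit $\mathcal{F}_V(\overline{HG}_t(q,\alpha)) \to \alpha/\iota(\alpha,\mu)$ as $t\to\infty$. Where you differ is in how that limit is established. The paper argues geometrically: Lemma~\ref{EventualCyl} shows $\alpha$ becomes a cylinder curve after enough grafting, the semigroup identity $HG_{t_1}\circ HG_{t_2}=HG_{t_1+t_2}$ reduces to grafting along that cylinder curve, and then the expanding cylinder forces geodesic representatives of any test curve $\gamma$ to be nearly vertical outside $C_\alpha$. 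You instead compute $\iota(\gamma,\mathcal{F}_V(\,\cdot\,))$ directly and divide by the area $1+t\cdot\iota(\alpha,\mu)$; this bypasses both lemmas and is arguably cleaner. The one spot that needs tightening is your justification of the $O(1)$ term: bounding the number of saddle connections controls neither the rerouting nor, more importantly, gives the needed \emph{lower} bound on the numerator. The clean fix is a sandwich estimate: the upper bound $\iota(\gamma,\mathcal{F}_V(HG_t(q,\alpha)))\le \iota(\gamma,\mathcal{F}_V(q))+t\cdot\iota(\gamma,\alpha)$ comes from exhibiting a (not necessarily geodesic) representative that follows the old geodesic and crosses the grafting cylinder $\iota(\gamma,\alpha)$ times, while the lower bound $\ge t\cdot\iota(\gamma,\alpha)-C$ (with $C$ independent of $t$) comes from the fact that every representative must cross the grafting cylinder at least $\iota(\gamma,\alpha)$ times essentially and each crossing accrues horizontal variation at least $t$ up to a constant; both bounds divided by $1+t\cdot\iota(\alpha,\mu)$ converge to $\iota(\gamma,\alpha)/\iota(\alpha,\mu)$. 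The remaining bookkeeping in your write-up --- openness of the filling condition, $\iota(\mu,\alpha_n)>0$ for large $n$, and the diagonal extraction (legitimate since $QD_1(S)$ is metrizable) --- is correct, and corresponds to the paper's device of choosing $q_\alpha$ inside the target neighborhood and a sub-neighborhood $B'$ around it.
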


\begin{proof}
The topology on $QD_1^\mu(S)$ is the subspace topology. Consider an open set $B = B_{q_0}(K,\varepsilon) \cap QD_1^\mu(S)$. By the denseness of weighted simple closed curves in the space of measured foliations, there exists a quadratic differential $q_\alpha \in B$ for some $\alpha \in \mathcal{S}(S)$, where $\mathcal{F}_V(q_\alpha)$ is equal to $(\iota(\alpha,\mu))^{-1}\cdot \alpha$, a weighted simple closed curve. Let $B' = B_{q_\alpha}(K',\varepsilon') $ be a subset of $B$. We will show that $\overline{HG}_t(q,\alpha)$ lies inside $B'$ for $t$ large enough. We make a quick observation on $\overline{HG}_t(q,\alpha)$ for $t$ large.

\begin{lemma}\label{EventualCyl}
If $\alpha$ has a non-horizontal saddle connection with respect to $q$, then $\alpha$ is a cylinder curve with respect to $\overline{HG}_t(q,\alpha)$ for some $t>0$.
\end{lemma}

\begin{proof}
Consider the grafting cylinder of the non-normalized $HG_t(q,\alpha)$ for large $t$. Every non-degenerate parallelogram in the grafting cylinder contains a maximal rectangle. The rectangle degenerates to a horizontal line segment for horizontal saddle connections. If $t$ is greater than the total horizontal span of $\alpha$, $\iota(\alpha,\mathcal{F}_V(q))$, then the maximal rectangles will stack up in the sense that there is a vertical line passing through them all. For $t$ large enough there is a cylinder curve inside the union of the rectangles that is homotopic to $\alpha$.
\end{proof}

Horizontal flat grafting along a cylinder curve is very simple. The cylinder as a parallelogram expands horizontally in $HG_t(q,\alpha)$ and converges to a rectangle in $\overline{HG}_t(q,\alpha)$. The next lemma helps us reduce to that case.

\begin{lemma}
We have $HG_{t_1}(HG_{t_2}(q,\alpha),\alpha) = HG_{t_1+t_2}(q,\alpha)$ and there exists a function $f$ that depends only on $I := \iota(\alpha,\mathcal{F}_H(q))$ such that
\[
\overline{HG}_{t_1}\left(\overline{HG}_{t_2}(q,\alpha),\alpha\right) = \overline{HG}_{f(t_1,t_2)}(q,\alpha).
\]
\end{lemma}

\begin{proof}
The first part follows from the fact that the geodesic representative of $\alpha$ with respect to the result of horizontal flat grafting will stay inside the grafting cylinder. It will always traverse each parallelogram exactly once from top to bottom. Hence the parallelograms expand horizontally at the same rate even when the geodesic representative changed.

The second part is a straightforward computation. The width of the grafting cylinder is 
\[
\frac{f(t_1,t_2)}{1+I\cdot f(t_1,t_2)} = \frac{\frac{t_2}{1+I\cdot t_2} + t_1}{1+I\cdot t_1}.
\]
Therefore $f(t_1,t_2) = t_2 + t_1 (1 + I\cdot t_2).$
\end{proof}

Now we return to the proof of Theorem~\ref{Dense}. We want to show that for $t$ large enough, $\overline{HG}_t(q,\alpha)$ lies in $B_{q_{\alpha}}(K',\varepsilon)$. Let $\gamma$ be a simple closed curve satisfying $\ell_{q_\alpha}(\gamma) < K'$. The horizontal measured foliation is fixed so we will focus on comparing $\iota(\gamma, \mathcal{F}_V(q_\alpha))$ and $\iota(\gamma, \mathcal{F}_V(\overline{HG}_t(q,\alpha)))$.

The intersection number $\iota(\gamma, \mathcal{F}_V(q_\alpha))$ is equal to $(\iota(\alpha,\mu))^{-1}\iota(\gamma, \alpha)$ by our choice of $q_\alpha$. The simple closed curve $\alpha$ contains a non-horizontal saddle connection with respect to any quadratic differential in $QD^\mu_1(S)$ by the choice of $q_\alpha$ as well. 

Consider a geodesic representative of $\gamma$ with respect to $\overline{HG}_t(q,\alpha)$ where $\alpha$ is a cylinder curve with respect to $\overline{HG}_t(q,\alpha)$ by Lemma~\ref{EventualCyl}. As $t$ goes to infinity, the saddle connections of a geodesic representative of $\gamma$ are either becoming more and more vertical or it intersection the cylinder $C_\alpha$. The intersection number $\iota(\gamma, \mathcal{F}_V(\overline{HG}_t(q,\alpha)))$ converges to $(\iota(\alpha,\mu))^{-1}\iota(\gamma, \alpha)$ and the horizontal flat grafting ray eventually lies inside $B'$.
\end{proof}

As a corollary we can almost construct paths between any pair of quadratic differentials. We can get as close as we want as long as we add in a stretch/shrink factor for the horizontal and vertical measured foliation.

\begin{cor}
For all $q,q' \in QD_1(S), K>0, \varepsilon>0$, there exists $x \in \mathbb{R}, t_H, t_V > 0, \alpha_H, \alpha_V \in \mathcal{S}(S)$ such that 
\[
{{e^x \ \ 0\ }\choose{\ 0 \ \  e^{-x}}} \cdot \overline{HG}_{t_H}\left(\overline{VG}_{t_V}(q',\alpha_V),\alpha_H\right) \in B_q(K,\varepsilon).
\]
\end{cor}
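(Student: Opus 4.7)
The plan is to apply Theorem~\ref{Dense} twice in succession, exploiting the complementary invariance of horizontal and vertical grafting, and then to absorb the resulting normalization factor into the diagonal matrix. Set $\mu = \mathcal{F}_H(q)$, $\nu = \mathcal{F}_V(q)$, and $\nu' = \mathcal{F}_V(q')$; recall that $\iota(\mu,\nu) = 1$. I first treat the generic case $\iota(\mu, \nu') > 0$.

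Choose $x = -\log \iota(\mu, \nu')$, so that $\iota(e^x \mu, \nu') = 1$. The vertical analog of Theorem~\ref{Dense}, obtained by conjugating the horizontal version with a $90^\circ$ rotation in $SL(2,\mathbb{R})$, says that $\overline{VG}_{\cdot}(q', \cdot)$ has image dense in the slice $\{p \in QD_1(S) : \mathcal{F}_V(p) = \nu'\}$. Assuming $(e^x \mu, \nu')$ is a filling pair, density lets me pick $\alpha_V, t_V$ so that $q_1 := \overline{VG}_{t_V}(q', \alpha_V)$ has $\mathcal{F}_H(q_1)$ arbitrarily close to $e^x \mu$, while $\mathcal{F}_V(q_1) = \nu'$ holds automatically.

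Next, apply Theorem~\ref{Dense} to $q_1$: the orbit $\overline{HG}_{\cdot}(q_1, \cdot)$ is dense in $QD_1^{\mathcal{F}_H(q_1)}(S)$. Since $\iota(\mathcal{F}_H(q_1), e^{-x}\nu) \approx \iota(e^x\mu, e^{-x}\nu) = \iota(\mu, \nu) = 1$, the foliation $e^{-x}\nu$ (rescaled slightly to have intersection exactly $1$ with $\mathcal{F}_H(q_1)$) is a valid target in this slice, so I can choose $\alpha_H, t_H$ so that $q_2 := \overline{HG}_{t_H}(q_1, \alpha_H)$ has $\mathcal{F}_V(q_2)$ arbitrarily close to $e^{-x}\nu$; the horizontal foliation $\mathcal{F}_H(q_2) = \mathcal{F}_H(q_1) \approx e^x\mu$ is preserved by horizontal grafting. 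Finally, $\operatorname{diag}(e^x, e^{-x})$ scales the horizontal transverse measure by $e^{-x}$ and the vertical by $e^x$, so $\operatorname{diag}(e^x, e^{-x}) \cdot q_2$ has $\mathcal{F}_H \approx \mu$ and $\mathcal{F}_V \approx \nu$. By the description of the topology on $QD_1(S)$ via intersection numbers with simple closed curves of bounded length, this endpoint lies in $B_q(K, \varepsilon)$ once the two approximations are sharp enough.

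The primary obstacle is the degenerate case $\iota(\mu, \nu') = 0$, where the naive $x$ diverges. To handle this I would first replace the target $q$ by a nearby $q^{\dagger} \in B_q(K, \varepsilon)$ whose horizontal foliation has positive intersection with $\nu'$; such a $q^{\dagger}$ exists because the condition $\iota(\mathcal{F}_H(\cdot), \nu') > 0$ is open and generic in $QD_1(S)$, and we only need the endpoint to land inside the open ball rather than at $q$ itself. A secondary technical point is that the second application of Theorem~\ref{Dense} takes place in the slice $QD_1^{\mathcal{F}_H(q_1)}$, which only approximates the slice $QD_1^{e^x\mu}$; a three-$\varepsilon$ argument using the continuity statements of Theorem~\ref{continuityH} together with the freedom to make each approximation arbitrarily sharp shows the compound error stays inside $\varepsilon$.
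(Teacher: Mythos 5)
Your argument is correct and is exactly the derivation the paper intends: the corollary is stated without proof immediately after Theorem~\ref{Dense}, with only the remark that one must ``add in a stretch/shrink factor,'' and your two-stage scheme (vertical grafting to bring the horizontal foliation of $q'$ to roughly $e^x\mu$ inside the $\nu'$-slice, then horizontal grafting to bring the vertical foliation to roughly $e^{-x}\nu$, then the diagonal rescaling) is the natural unpacking of that remark. The one point deserving slightly more care is that in the first stage the pair $(e^x\mu,\nu')$ must not merely have positive intersection but be approximable by filling pairs within the $\nu'$-slice; your perturbation of the target $q$ to a nearby $q^{\dagger}$ repairs this exactly as it repairs the degenerate case $\iota(\mu,\nu')=0$, since foliations filling with $\nu'$ are dense (the non-filling locus is a countable union of closed nowhere-dense sets).
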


We have shown that for all $q$, there exists $t>0$ and $\alpha \in\mathcal{S}(S)$ such that $\overline{HG}_t(q,\alpha) \in QD_1(S,\sigma_p)$. As a corollary, we obtain Theorem~\ref{principal} if we just choose $\alpha$ such that it contains no horizontal saddle connections with respect to $q$. This observation comes from our analysis of strata structure changes from Proposition~\ref{LongTerm} and the denseness of valid choices of $\alpha$. Making use of the principal stratum we can construct different paths between quadratic differentials via deforming both toward a fixed quadratic differential in the principal stratum. The period coordinate in the principal stratum allows us to connect the two deforming paths once they fall in a fixed open neighborhood.


\section{Length rigidity on slices}\label{RIGID}

In \cites{BanLei,DLR,Fu}, the authors considered the marked length rigidity problem for $\mathsf{Flat}(S)$. The length rigidity does not apply to quadratic differentials since a circle of quadratic differentials induce the same metric in $\mathsf{Flat}(S)$. However, each quadratic differential in a slice $QD^\mu(S)$ induces a unique metric and the length rigidity problem can be posed.

Consider the special case when $\mu$ is a weighted simple closed curve $\omega\cdot \alpha$. This is a special case that motivated our work. Recall that a family of metrics is length rigid with respect to a set of closed curves if the length vector function is injective.

\begin{theorem}\label{9g-9}
The set $QD_1^{\omega\cdot\alpha}(S)$ for $\omega > 0$ and $\alpha\in\mathcal{S}(S)$ is length rigid with respect to a set of $(9g+3p-10)$ simple closed curves.
\end{theorem}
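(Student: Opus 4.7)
Proof proposal. The plan is to combine a local linear-algebra argument on the principal stratum with a density/continuity reduction via flat grafting. Every $q \in QD_1^{\omega\cdot\alpha}(S)$ admits a uniform structural description: it is a single horizontal cylinder around $\alpha$ of fixed width $1/\omega$ and height $\omega$, whose top and bottom boundaries are each partitioned into horizontal saddle connections and identified in pairs. Thus $q$ is encoded by a discrete gluing pattern together with finitely many continuous parameters (positions of boundary saddle connections on each side, plus a twist offset). On the open dense subset $QD_1^{\omega\cdot\alpha}(S) \cap QD(S,\sigma_p)$, these parameters form affine coordinates, and every length $\ell_q(\gamma)$ splits into cylinder-traversal contributions of the form $\sqrt{h_i^2+\omega^2}$ plus plain horizontal saddle-connection lengths traversed along the boundary.

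For the curve set I would begin with a pants decomposition $\{P_1,\ldots,P_{3g-3+p}\}$ of $S$ chosen so that every $P_i$ intersects $\alpha$ essentially (possible by selecting pants curves that cross the $\alpha$-cylinder) and augment it with two auxiliary curves per pants curve: a \emph{dual seam} crossing $P_i$ once within one of its adjacent pairs of pants and a \emph{twist detector} obtained from a partial Dehn twist about $P_i$. This yields $3(3g-3+p)=9g+3p-9$ curves, mirroring the classical Hamenst\"{a}dt--Wolpert template for length rigidity on Teichm\"{u}ller space. One curve is then discarded to account for the constraint that $\ell_q(\alpha)=1/\omega$ is constant across the slice, giving the target set of $N=9g+3p-10$ simple closed curves.

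The main technical step is local rigidity on the principal stratum: prove that the Jacobian of $q \mapsto (\ell_q(\gamma_1),\ldots,\ell_q(\gamma_N))$ has full rank at every point of $QD_1^{\omega\cdot\alpha}(S)\cap QD(S,\sigma_p)$. This reduces to verifying that the pants-adapted family is combinatorially rich enough to separately detect each coordinate direction (saddle-connection positions on the two boundaries of the cylinder, and the relative twist) in every gluing configuration, which is a finite linear-algebra check using the explicit formula for each length in terms of the affine coordinates. Since the length functions are piecewise real-analytic, full Jacobian rank at every point globalizes to injectivity on the open dense principal-stratum portion of the slice.

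To extend rigidity to the entire slice I would invoke Theorem~\ref{principal} together with the continuity and injectivity from Theorems~\ref{mainpropH}--\ref{continuityH}. If $q_1\neq q_2 \in QD_1^{\omega\cdot\alpha}(S)$ had identical length vectors, I would apply a horizontal flat grafting $\overline{HG}_t(\,\cdot\,,\beta)$ along a common curve $\beta$ pushing both surfaces into the principal stratum while preserving the horizontal foliation $\omega\alpha$ (and therefore the slice itself); the explicit scaling factor $1+t\iota(\beta,\omega\alpha)$ from the proof of Theorem~\ref{continuityH} lets one track how the length vectors transform and reduce to the principal-stratum statement just proved. The main obstacle I anticipate is proving the sharpness of the count $9g+3p-10$: the pants-adapted differentials must remain linearly independent \emph{uniformly} over every combinatorial gluing type on the $\alpha$-cylinder boundary, which requires a careful combinatorial analysis of how horizontal saddle connections in $QD(S,\sigma_p)$ can arrange themselves relative to the chosen pants decomposition. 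It is here that the numerology $3(3g-3+p)-1 = 9g+3p-10$ becomes tight and the selection of dual and twist-detector curves must be made most carefully.
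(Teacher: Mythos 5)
Your proposal diverges from the paper's argument in a way that leaves genuine gaps. The paper's proof is elementary and global: it chooses a pants decomposition \emph{containing} $\alpha$, so that every pants curve and almost every transverse curve is disjoint from $\alpha$ and hence \emph{horizontal} with respect to every $q$ in the slice; for such curves the flat length is exactly the intersection number with $\mathcal{F}_V(q)$, and Lemma~\ref{MFrigid} (the $(9g-9)$-theorem analogue, which is a genuinely global injectivity statement for the intersection function on $\mathcal{MF}(S)$) then pins down the vertical foliation up to the twist along $\alpha$; the twist parameter $s$ is recovered from two more curves using the strict convexity of $\ell(\cdot)$ in $s$. You instead insist that every pants curve cross $\alpha$, which forfeits precisely the linearization (length $=$ intersection number) that makes the global argument work, and your count ``discard one curve because $\ell_q(\alpha)$ is constant'' is not an argument --- the paper actually removes three curves ($\alpha$ and its two transversals) and adds back two for the twist.

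Two steps of your plan would fail as stated. First, ``full Jacobian rank at every point globalizes to injectivity on the open dense principal-stratum portion'' is false: a local diffeomorphism need not be injective, and piecewise real-analyticity does not rescue this; you would need a genuinely global separation argument, which is exactly what the intersection-number formulation supplies. Second, the reduction of the whole slice to the principal stratum via flat grafting does not go through: if $q_1 \neq q_2$ have equal length vectors on your curve system, grafting both along a common $\beta$ does not preserve that equality, because $\ell_{\overline{HG}_t(q,\beta)}(\gamma)$ depends on how the geodesic representative of $\gamma$ meets the grafting cylinder (its crossing pattern and slopes), not merely on $\ell_q(\gamma)$; moreover Theorem~\ref{principal} produces a grafting curve depending on the individual $q$, so a common $\beta$ is not guaranteed. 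The paper needs no such reduction for this theorem --- the Strebel case is handled directly and the principal stratum only enters in the separate local result, Theorem~\ref{FiniteClosed}.
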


\begin{proof}
By definition, $\mathcal{F}_H(q) = \omega\cdot\alpha$ for all $q\in QD_1^{\omega\cdot\alpha}(S)$. If $\gamma \in \mathcal{S}(S)$ is disjoint from $\alpha$, then $\gamma$ is horizontal with respect to any $q\in QD_1^{\omega\cdot\alpha}(S)$. When a closed curve is horizontal with respect to a quadratic differential, the length of the closed curve is given by the intersection number with the vertical measured foliation. With a similar wording, we prove the intersection rigidity of measured foliations with respect to a finite set of simple closed curves.

\begin{lemma}\label{MFrigid}
There exists a set $\Sigma$ of $(9g+3p-9)$ simple closed curves such that the function
\[
\iota(\Sigma, \ \cdot \ ) : \mathcal{MF}(S) \to \mathbb{R}^{9g+3p-9}
\]
is injective.
\end{lemma}

\begin{proof}
The choice of $\Sigma$ is analogous to the $(9g-9)$-theorem in \cite{FaMa}. Take a maximal set of disjoint simple closed curves, called pants curves. The pants curve set contains $(3g+p-3)$ curves. The intersection number with respect to the pants curves determines a measured foliation up to twisting along pants curves. Consider a set of transverse curves where each curve intersects exactly one pants curve and intersects it minimally (once or twice). The relationship between the twisting along a pants curve and the intersection number with respect to the transverse curve is given by an absolute value function. The slope of the relationship is fixed hence it suffices to pick a pair of transverse curve for each pants curve. Therefore the intersection number with the constructed set $\Sigma$ consisting of $(9g+3p-9)$ curves determine the measured foliation uniquely.
\end{proof}

The length of $\alpha$ with respect to any $q\in QD_1^{\omega\cdot\alpha}(S)$ is equal to $1/\omega$ since the quadratic differential has unit-area. By picking a set of pants curves that include $\alpha$, the length vector of the pants curves is equal to the intersection vector of the intersections number with the vertical measured foliation. The same is true for any transverse curve corresponding to a pants curve that is not $\alpha$. 

The lemma above shows that the length vector of a set of $(9g+3p-12)$ simple closed curves determine the vertical measured foliation up to twisting along $\alpha$. In other words, the preimage of a length vector of length $(9g+3p-12)$ is a family of quadratic differentials obtained by the shearing action ${{1 \ s}\choose{0\ 1}}$. The length of a non-horizontal simple closed curve is a convex function with respect to $s$, hence it suffices to choose two simple closed curves to determine $s$, meaning that $QD_1^{\omega\cdot\alpha}(S)$ is length rigid with respect to a set of $(9g+3p-10)$ curves.
\end{proof}

The above result is special in the sense that the proof does not extend to even the case of multicurves. For the general case we prove a local result instead. Theorem~\ref{rigidity} is a corollary of the following theorem. 

\begin{theorem}\label{FiniteClosed}
For all $q \in QD_1^\mu(S)$, there exists $K>0$ and $\varepsilon>0$ such that $B_q(K,\varepsilon)$ is length rigid with respect to a finite set of closed curves.
\end{theorem}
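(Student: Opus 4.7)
The plan is to analyze the local structure of the slice $QD_1^\mu(S)$ through period coordinates and then find finitely many simple closed curves whose length map is an injection on a sufficiently small ball. First, I would fix a geodesic triangulation $\Delta$ of $q$ whose edges are saddle connections (perturbing if necessary so no edge is horizontal, which by Theorem~\ref{continuityH} is the right condition to avoid discontinuities in the variation of lengths). By continuity of the combinatorial structure, there exist $K_0,\varepsilon_0>0$ so that $\Delta$ persists as a geodesic triangulation for every $q'\in B_q(K_0,\varepsilon_0)$. The edge holonomies $z_e=(x_e,y_e)\in\mathbb{R}^2$ then furnish local real-analytic period coordinates on $QD(S)$ near $q$. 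Since we are on the slice $QD_1^\mu$, the horizontal components $x_e$ are pinned by the transverse measure of $\mu$ on each edge together with the unit-area constraint, so $QD_1^\mu\cap B_q(K_0,\varepsilon_0)$ is a finite-dimensional real-analytic submanifold parametrized by the vertical components $\{y_e\}$ subject to linear closing-up relations.

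Next, for a simple closed curve $\gamma$ whose geodesic representative is transverse to $\Delta$ in a fixed combinatorial pattern, the length $\ell_{q'}(\gamma)$ is a finite sum $\sum_j\sqrt{x_j^2+y_j^2}$ of Euclidean norms of saddle-connection vectors that are linear in the period coordinates. By shrinking $K,\varepsilon$, the combinatorial decomposition of any fixed finite collection of simple closed curves is constant across $B_q(K,\varepsilon)$, and each length function becomes real-analytic in these coordinates. The goal is to find $\gamma_1,\dots,\gamma_n\in\mathcal{S}(S)$ so that
\[
\Psi:B_q(K,\varepsilon)\cap QD_1^\mu\to\mathbb{R}^n,\qquad \Psi(q')=(\ell_{q'}(\gamma_j))_{j=1}^n,
\]
is injective. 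The strategy is to first secure infinitesimal separation: exhibit curves whose length differentials $d\ell_{\gamma_j}|_q$ together span the cotangent space of the slice. A natural starting pool is the $(9g+3p-9)$-curve set from Lemma~\ref{MFrigid}, which separates vertical measured foliations; this handles the coarse structure. Additional curves whose geodesic representatives cross each edge of $\Delta$ with a nonzero vertical contribution can then be added to recover any remaining degrees of freedom in the $y_e$ direction. Once $d\Psi|_q$ is surjective onto $\mathbb{R}^{\dim QD_1^\mu}$, the inverse function theorem promotes this to injectivity of $\Psi$ on a (possibly smaller) open ball, completing the proof.

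The main obstacle is the infinitesimal-separation step: one must verify that a judicious finite collection of simple closed curves realizes the entire cotangent space of the slice through their length differentials, not just the vertical foliation component. This requires a careful accounting of how each $y_e$ contributes to the lengths of candidate curves modulo the horizontal-foliation and area constraints, analogous to but more delicate than the twist-parameter detection used in the proof of Theorem~\ref{9g-9}. A secondary subtlety, avoided by the initial perturbation of $\Delta$, is ensuring the combinatorial type of geodesic representatives is genuinely locally constant so that the length functions are smooth; this is where the condition of no horizontal saddle connections among the chosen curves becomes essential. With these ingredients in place, Theorem~\ref{FiniteClosed} follows, and Theorem~\ref{rigidity} is obtained by covering a compact subset of $QD_1^\mu(S)$ by finitely many such balls and taking the union of the associated finite sets of curves.
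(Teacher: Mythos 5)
There is a genuine gap, and it sits exactly where the theorem's difficulty lies. Your argument is built on the claim that a geodesic triangulation $\Delta$ of $q$ ``persists as a geodesic triangulation for every $q'\in B_q(K_0,\varepsilon_0)$,'' and that the combinatorial decomposition of the chosen curves is locally constant, so that the slice is locally a single real-analytic manifold in the period coordinates $\{y_e\}$ and the inverse function theorem applies. This is false precisely when $q$ does not lie in the principal stratum: every neighborhood $B_q(K,\varepsilon)\cap QD_1^\mu(S)$ contains quadratic differentials in larger strata obtained by splitting cone points (within the slice such splittings are characterized by short horizontal saddle connections), the vertex set of any triangulation by saddle connections changes under such a splitting, and the dimension of the nearby stratum exceeds that of the stratum of $q$, so the periods of $\Delta$ cannot parametrize the neighborhood. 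The single-stratum case your setup actually covers is essentially \cite[Theorem~3]{Fu}; the content of Theorem~\ref{FiniteClosed} is the multi-stratum neighborhood, which the paper handles by adding, for each cone point of excess angle, closed curves whose length vector detects cone-point splitting, together with the five-curve linear combination $\frac{1}{2}[\ell(\gamma_1)+\ell(\gamma_2)-\ell(\gamma_3)-\ell(\gamma_4)+\ell(\gamma_5)]$ that recovers the length of each saddle connection of the geodesic representatives of the curves in Lemma~\ref{MFrigid} stably across the nearby strata.

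Separately, even within a fixed stratum you defer the decisive step: you name the ``infinitesimal separation'' of the cotangent space by length differentials as the main obstacle but do not construct curves achieving it, whereas the paper's proof is exactly that construction (curves surrounding each saddle connection whose lengths determine it by a locally stable linear relation, plus the splitting-detecting curves). Note also that an inverse-function-theorem argument would at best give injectivity on a smaller smooth neighborhood inside one stratum; to get injectivity of the length map on all of $B_q(K,\varepsilon)\cap QD_1^\mu(S)$ one must separate points lying in different strata, which requires the explicit detection of splittings rather than a derivative computation at $q$. Your closing reduction of Theorem~\ref{rigidity} to Theorem~\ref{FiniteClosed} by a finite subcover is the same as the paper's.
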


\begin{proof}
The idea is to begin with a set $\Sigma$ of simple closed curves as in Lemma~\ref{MFrigid} above. While the intersection vector of the vertical foliation with $\Sigma$ determines the quadratic differential in $QD_1^\mu(S)$, the length vector $\ell_\cdot(\Sigma)$ is not necessarily determined by the intersection vector. However, the lengths of some of the saddle connections in the geodesic representatives of $\Sigma$ are changing as we vary the quadratic differential. We can construct a set of closed curves using the idea in the proof of \cite[Theorem~3]{Fu} such that local rigidity is obtained. The following argument is for the case when $S$ has no marked points. The case with marked points is analogue to the adjustments in the proof of \cite[Theorem~3]{Fu}.

If $q$ lies in the principal stratum, then \cite[Theorem~3]{Fu} proves the local rigidity result. If $q$ is not in the principal stratum, then we need to take into account the splitting of cone points. An important observation is that any splitting of the cone points can be characterized by the horizontal saddle connections that arise from the splitting. In Figure~\ref{SClength} we illustrate the method to determine the length of a saddle connection via lengths of closed curves.

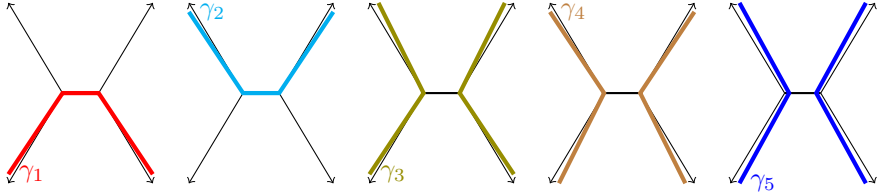
\begin{figure}[ht]
\begin{tikzpicture}[scale=1.2]

\draw[thick] (-4.2,0) -- (-3.8,0); 
 \draw [<-] (-4.8,1) -- (-4.2,0); 
 \draw [<-] (-4.8,-1) -- (-4.2,0); 
 \draw [->] (-3.8,0) -- (-3.2,1); 
 \draw [->] (-3.8,0) -- (-3.2,-1); 

\draw[thick] (-2.2,0) -- (-1.8,0); 
 \draw [<-] (-2.8,1) -- (-2.2,0); 
 \draw [<-] (-2.8,-1) -- (-2.2,0); 
 \draw [->] (-1.8,0) -- (-1.2,1); 
 \draw [->] (-1.8,0) -- (-1.2,-1); 

\draw[thick] (-0.2,0) -- (0.2,0); 
 \draw [<-] (-0.8,1) -- (-0.2,0); 
 \draw [<-] (-0.8,-1) -- (-0.2,0); 
 \draw [->] (0.2,0) -- (0.8,1); 
 \draw [->] (0.2,0) -- (0.8,-1); 

\draw[thick] (1.8,0) -- (2.2,0); 
 \draw [<-] (1.2,1) -- (1.8,0); 
 \draw [<-] (1.2,-1) -- (1.8,0); 
 \draw [->] (2.2,0) -- (2.8,1); 
 \draw [->] (2.2,0) -- (2.8,-1); 

\draw[thick] (3.8,0) -- (4.2,0); 
 \draw [<-] (3.2,1) -- (3.8,0); 
 \draw [<-] (3.2,-1) -- (3.8,0); 
 \draw [->] (4.2,0) -- (4.8,1); 
 \draw [->] (4.2,0) -- (4.8,-1); 

\draw[red,ultra thick] (-4.8,-0.9) node[right]{$\gamma_1$} -- (-4.2,0) -- (-3.8,0) -- (-3.2,-0.9);
\draw[cyan,ultra thick] (-2.8,0.9) node[right]{$\gamma_2$} -- (-2.2,0) -- (-1.8,0) -- (-1.2,0.9);
\draw[blue,ultra thick] (3.3,1) -- (3.85,0) -- (3.3,-1) node[right]{$\gamma_5$};
\draw[blue,ultra thick] (4.7,-1) -- (4.15,0) -- (4.7,1);
\draw[olive,ultra thick] (-0.8,-0.9) node[right]{$\gamma_3$} -- (-0.2,0) -- (-0.7,1);
\draw[olive,ultra thick] (0.8,-0.9) -- (0.2,0) -- (0.7,1);
\draw[brown,ultra thick] (1.3,-1) -- (1.8,0) -- (1.2,0.9) node[right]{$\gamma_4$};
\draw[brown,ultra thick] (2.7,-1) -- (2.2,0) -- (2.8,0.9);

\end{tikzpicture}
\caption{Length of the saddle connection is a linear combination of $\ell(\gamma_j)$'s.}
\label{SClength}
\end{figure}

Figure~\ref{SClength} shows five geodesics near a pair of cone points connected by a horizontal saddle connection. The underlying black lines illustrate the horizontal direction and the cone points in Figure~\ref{SClength} both have cone angle $3\pi$. In a neighborhood of the quadratic differential that admits a similar structure, the length of the saddle connection is equal to 
\[
\frac{1}{2}[\ell(\gamma_1)+\ell(\gamma_2)-\ell(\gamma_3)-\ell(\gamma_4)+\ell(\gamma_5)].
\]
For any given $q \in QD_1^\mu(S)$, we construct five closed geodesics for every four distinct horizontal directions at a cone point. In other words, a finite set of closed curves satisfy the property that their length vector function over $B_q(K,\varepsilon) \cap QD_1^\mu(S)$ detects cone point splitting.

Once we obtain control over cone points splitting, it suffices to choose $\Sigma$ satisfying Lemma~\ref{MFrigid} and construct closed curves for each saddle connection of geodesic representatives of $\Sigma$ with respect to $q$. By the action of the mapping class group we can choose $\Sigma$ to be non-vertical to avoid the edge cases. For saddle connections between cone points of cone angle $3\pi$, the construction of $\gamma_j$'s is the same as \cite[Theorem~3]{Fu}. For saddle connections between cone points of higher cone angles, the construction needs to respect the possibility of cone point splitting. However, with the extra cone angle, the construction has a lot of extra room for choosing the directions leaving the cone points such that the length of the saddle connection is still equal to the same linear combination in $B_q(K,\varepsilon) \cap QD_1^\mu(S)$. Therefore the combination of the closed curves for splitting and saddle connections of $\Sigma$ is the desired finite set where $B_q(K,\varepsilon)$ is chosen small enough to maintain the linear combination equations.
\end{proof}

The gap between Theorem~\ref{9g-9} and Theorem~\ref{FiniteClosed} in this section prompts further research into the relationship between the length spectrum and the slices of quadratic differentials.


\begin{bibdiv}
\begin{biblist}

\bib{AEZ}{article}{
   author={Athreya, Jayadev S.},
   author={Eskin, Alex},
   author={Zorich, Anton},
   title={Right-angled billiards and volumes of moduli spaces of quadratic
   differentials on $\Bbb C\rm P^1$},
   language={English, with English and French summaries},
   note={With an appendix by Jon Chaika},
   journal={Ann. Sci. \'Ec. Norm. Sup\'er. (4)},
   volume={49},
   date={2016},
   number={6},
   pages={1311--1386},
   issn={0012-9593},
}

\bib{BanLei}{article}{
   author={Bankovic, Anja},
   author={Leininger, Christopher J.},
   title={Marked-length-spectral rigidity for flat metrics},
   journal={Trans. Amer. Math. Soc.},
   volume={370},
   date={2018},
   number={3},
   pages={1867--1884},
   issn={0002-9947},
}
\bib{Boi}{article}{
   author={Boissy, Corentin},
   title={Degenerations of quadratic differentials on $\Bbb C\Bbb P^1$},
   journal={Geom. Topol.},
   volume={12},
   date={2008},
   number={3},
   pages={1345--1386},
   issn={1465-3060},
}

\bib{Bon}{article}{
   author={Bonahon, Francis},
   title={Earthquakes on Riemann surfaces and on measured geodesic
   laminations},
   journal={Trans. Amer. Math. Soc.},
   volume={330},
   date={1992},
   number={1},
   pages={69--95},
   issn={0002-9947},
}

\bib{BonOta}{article}{
   author={Bonahon, Francis},
   author={Otal, Jean-Pierre},
   title={Laminations measur\'ees de plissage des vari\'et\'es hyperboliques de
   dimension 3},
   language={French, with English summary},
   journal={Ann. of Math. (2)},
   volume={160},
   date={2004},
   number={3},
   pages={1013--1055},
   issn={0003-486X},
}

\bib{DLR}{article}{
   author={Duchin, Moon},
   author={Leininger, Christopher J.},
   author={Rafi, Kasra},
   title={Length spectra and degeneration of flat metrics},
   journal={Invent. Math.},
   volume={182},
   date={2010},
   number={2},
   pages={231--277},
   issn={0020-9910},
   doi={10.1007/s00222-010-0262-y},
}

\bib{DumWol}{article}{
   author={Dumas, David},
   author={Wolf, Michael},
   title={Projective structures, grafting and measured laminations},
   journal={Geom. Topol.},
   volume={12},
   date={2008},
   number={1},
   pages={351--386},
   issn={1465-3060},
}

\bib{EMZ}{article}{
   author={Eskin, Alex},
   author={Masur, Howard},
   author={Zorich, Anton},
   title={Moduli spaces of abelian differentials: the principal boundary,
   counting problems, and the Siegel-Veech constants},
   journal={Publ. Math. Inst. Hautes \'Etudes Sci.},
   number={97},
   date={2003},
   pages={61--179},
   issn={0073-8301},
}

\bib{EMM}{article}{
   author={Eskin, Alex},
   author={Mirzakhani, Maryam},
   author={Mohammadi, Amir},
   title={Isolation, equidistribution, and orbit closures for the ${\rm
   SL}(2,\Bbb R)$ action on moduli space},
   journal={Ann. of Math. (2)},
   volume={182},
   date={2015},
   number={2},
   pages={673--721},
   issn={0003-486X},
}

\bib{FaMa}{book}{
   author={Farb, Benson},
   author={Margalit, Dan},
   title={A primer on mapping class groups},
   series={Princeton Mathematical Series},
   volume={49},
   publisher={Princeton University Press, Princeton, NJ},
   date={2012},
   pages={xiv+472},
   isbn={978-0-691-14794-9},
}

\bib{FLP}{collection}{
   author={Fathi, Albert},
   author={Laudenbach, Fran{\c{c}}ois},
   author={Po{\'e}naru, Valentin},
   title={Thurston's work on surfaces},
   series={Mathematical Notes},
   volume={48},
   note={Translated from the 1979 French original by Djun M. Kim and Dan
   Margalit},
   publisher={Princeton University Press, Princeton, NJ},
   date={2012},
   pages={xvi+254},
   isbn={978-0-691-14735-2},
}

\bib{Fu}{article}{
   author={Fu, Ser-Wei},
   title={Length spectra and strata of flat metrics},
   journal={Geom. Dedicata},
   volume={173},
   date={2014},
   pages={281--298},
   issn={0046-5755},
   doi={10.1007/s10711-013-9942-2},
}

\bib{GarMas}{article}{
   author={Gardiner, Frederick P.},
   author={Masur, Howard},
   title={Extremal length geometry of Teichm\"uller space},
   journal={Complex Variables Theory Appl.},
   volume={16},
   date={1991},
   number={2-3},
   pages={209--237},
   issn={0278-1077},
}

\bib{HubMas}{article}{
   author={Hubbard, John},
   author={Masur, Howard},
   title={Quadratic differentials and foliations},
   journal={Acta Math.},
   volume={142},
   date={1979},
   number={3-4},
   pages={221--274},
   issn={0001-5962},
}

\bib{KamTan}{article}{
   author={Kamishima, Yoshinobu},
   author={Tan, Ser P.},
   title={Deformation spaces on geometric structures},
   conference={
      title={Aspects of low-dimensional manifolds},
   },
   book={
      series={Adv. Stud. Pure Math.},
      volume={20},
      publisher={Kinokuniya, Tokyo},
   },
   date={1992},
   pages={263--299},
}

\bib{Ker}{article}{
   author={Kerckhoff, Steven P.},
   title={The Nielsen realization problem},
   journal={Ann. of Math. (2)},
   volume={117},
   date={1983},
   number={2},
   pages={235--265},
   issn={0003-486X},
}

\bib{Mas}{article}{
   author={Masur, Howard},
   title={Interval exchange transformations and measured foliations},
   journal={Ann. of Math. (2)},
   volume={115},
   date={1982},
   number={1},
   pages={169--200},
   issn={0003-486X},
}

\bib{Masur2}{article}{
   author={Masur, Howard},
   title={Closed trajectories for quadratic differentials with an
   application to billiards},
   journal={Duke Math. J.},
   volume={53},
   date={1986},
   number={2},
   pages={307--314},
   issn={0012-7094},
}

\bib{MasTab}{article}{
   author={Masur, Howard},
   author={Tabachnikov, Serge},
   title={Rational billiards and flat structures},
   conference={
      title={Handbook of dynamical systems, Vol.\ 1A},
   },
   book={
      publisher={North-Holland, Amsterdam},
   },
   date={2002},
   pages={1015--1089},
}

\bib{MZ}{article}{
   author={Masur, Howard},
   author={Zorich, Anton},
   title={Multiple saddle connections on flat surfaces and the principal
   boundary of the moduli spaces of quadratic differentials},
   journal={Geom. Funct. Anal.},
   volume={18},
   date={2008},
   number={3},
   pages={919--987},
   issn={1016-443X},
   doi={10.1007/s00039-008-0678-3},
}

\bib{ScaWol}{article}{
   author={Scannell, Kevin P.},
   author={Wolf, Michael},
   title={The grafting map of Teichm\"uller space},
   journal={J. Amer. Math. Soc.},
   volume={15},
   date={2002},
   number={4},
   pages={893--927},
   issn={0894-0347},
}

\bib{SmiWei}{article}{
   author={Smillie, John},
   author={Weiss, Barak},
   title={Finiteness results for flat surfaces: a survey and problem list},
   conference={
      title={Partially hyperbolic dynamics, laminations, and Teichm\"uller
      flow},
   },
   book={
      series={Fields Inst. Commun.},
      volume={51},
      publisher={Amer. Math. Soc., Providence, RI},
   },
   date={2007},
   pages={125--137},
}

\bib{Str}{book}{
   author={Strebel, Kurt},
   title={Quadratic differentials},
   series={Ergebnisse der Mathematik und ihrer Grenzgebiete (3) [Results in
   Mathematics and Related Areas (3)]},
   volume={5},
   publisher={Springer-Verlag, Berlin},
   date={1984},
   pages={xii+184},
   isbn={3-540-13035-7},
}

\bib{Thu2}{article}{
   author={Thurston, W. P.},
   title={The Geometry and Topology of Three-manifolds},
   journal={Princeton Lecture Notes},
   date={1980},
}

\bib{Thu}{article}{
   author={Thurston, W. P.},
   title={Earthquakes in 2-dimensional hyperbolic geometry [MR0903860]},
   conference={
      title={Fundamentals of hyperbolic geometry: selected expositions},
   },
   book={
      series={London Math. Soc. Lecture Note Ser.},
      volume={328},
      publisher={Cambridge Univ. Press, Cambridge},
   },
   date={2006},
   pages={267--289},
}

\bib{Wri}{article}{
   author={Wright, Alex},
   title={Cylinder deformations in orbit closures of translation surfaces},
   journal={Geom. Topol.},
   volume={19},
   date={2015},
   number={1},
   pages={413--438},
   issn={1465-3060},
   review={\MR{3318755}},
}

\bib{Zor}{article}{
   author={Zorich, Anton},
   title={Flat surfaces},
   conference={
      title={Frontiers in number theory, physics, and geometry. I},
   },
   book={
      publisher={Springer, Berlin},
   },
   date={2006},
   pages={437--583},
}

\end{biblist}
\end{bibdiv}

\end{document}